\newcommand{\st}{\;:\;}
\DeclareMathOperator{\linspan}{span}
\DeclareMathOperator{\Div}{div}
\newtheorem{thm}{Theorem}[section]
\newtheorem{lemma}[thm]{Lemma}
\newtheorem{cor}[thm]{Corollary}
\newtheorem{rem}[thm]{Remark}
\newtheorem{defin}[thm]{Definition}
\newtheorem{openquestion}[thm]{Open Question}
\DeclareMathOperator{\BV}{BV}
\numberwithin{equation}{section}
\newcommand{\set}[2]{\{{#1}\mid{#2}\}}
\theoremstyle{definition}
\DeclareMathOperator{\esssup}{ess \, sup}
\begin{document}


\title[An atomic decomposition for functions of bounded variation]{An atomic decomposition for functions of bounded variation}

\author{Daniel Spector}
\address{Daniel Spector\hfill\break\indent 
Department of Mathematics\hfill\break\indent 
National Taiwan Normal University\hfill\break\indent 
No. 88, Section 4, Tingzhou Road, Wenshan District, Taipei City, Taiwan 116, R.O.C.\hfill\break\indent
and \hfill\break\indent
National Center for Theoretical Sciences\hfill\break\indent 
No. 1 Sec. 4 Roosevelt Rd., National Taiwan
University, Taipei, 106, Taiwan\hfill\break\indent
and\hfill\break\indent
Department of Mathematics\hfill\break\indent 
University of Pittsburgh\hfill\break\indent 
Pittsburgh, PA 15261 USA
}
\email{spectda@gapps.ntnu.edu.tw}

\author{Cody B. Stockdale}
\address{Cody B. Stockdale\hfill\break\indent 
 School of Mathematical Sciences and Statistics\hfill\break\indent 
 Clemson University\hfill\break\indent 
 Clemson, SC 29634, USA}
\email{cbstock@clemson.edu}

\author{Dmitriy Stolyarov}
\address{Dmitriy Stolyarov\hfill\break\indent 
Department of Mathematics and Computer Science\hfill\break\indent 
St. Petersburg State University\hfill\break\indent 
14th Line 29b, Vasilyevsky Island, St. Petersburg, Russia, 199178; \hfill\break\indent }
\email{d.m.stolyarov@spbu.ru}

\begin{abstract}
In this paper, we give a decomposition of the gradient measure  
$Du$ of an arbitrary function of bounded variation $u$ into a sum of atoms $\mu=D\chi_{F}$, where $F$ is a set of finite perimeter.  The atoms further satisfy the support, cancellation, normalization, and size conditions:  {F}or each $\mu$, there exists a {cube} $Q$ such that $\operatorname*{supp}\mu\subset Q$, 
 $\mu(Q)=0$, $|\mu|(Q)\leq 1$, and, denoting by $p_t$ the heat kernel in $\mathbb{R}^d$, 
\begin{align*}
\esssup_{x \in \mathbb{R}^d, t>0} |t^{1/2} p_t \ast \mu (x)|   \leq \frac{1}{l(Q)^{d-1}}.
\end{align*}
Our proof relies on a sampling of the coarea formula and a new boxing identity. 
We present several consequences of this result, including Sobolev inequalities, dimension estimates, and trace inequalities.
\end{abstract}
\maketitle


\section{Introduction}\label{IntroductionSection}
For $d\ge 2$ and an open set $\Omega \subset \mathbb{R}^d$ with compact Lipschitz boundary, denote by $\BV(\Omega)$ the space of integrable functions $u$ on $\Omega$ whose distributional derivatives $Du$ are finite Radon measures in $\Omega$. This space of functions of bounded variation has been extensively developed in connection with its role in the problem of Plateau \cite{Giusti}, and has since found many further applications, for instance, in image segmentation and fracture mechanics \cite{AFP} and in divergence form quasilinear PDE \cite{Anzellotti}. The usefulness of $\BV(\Omega)$ stems from its simple definition and robust structure, and its study has been well-codified in the literature; see \cites{AFP,EG,Ziemer}.

Two key properties of $u \in \BV(\Omega)$ are Gagliardo and Nirenberg's Sobolev-type inequality\footnote{When $\Omega=\mathbb{R}^d$, the term $\|u\|_{L^{1}(\mathbb{R}^d)}$ is not necessary.}: 
\begin{align}\label{GagliardoNirenberg}
\|u\|_{L^{d/(d-1)}(\Omega)} \lesssim |Du|(\Omega) + \|u\|_{L^{1}(\Omega)},
\end{align}
and the absolute continuity of the variation of the measure derivative $Du$ with respect to the $(d-1)$-dimensional Hausdorff measure:
\begin{align}\label{DimensionEstimates}
|Du| \ll \mathcal{H}^{d-1}.
\end{align}
While \eqref{GagliardoNirenberg} is relatively easy to prove\footnote{When one has the right idea, as there were roughly two decades between Sobolev's demonstration of the analogous result for $p>1$ and the first proof of \eqref{GagliardoNirenberg}.}, \eqref{DimensionEstimates} is less straightforward.  In particular,~\eqref{GagliardoNirenberg} can be argued from a combination of  
another fundamental result for $\BV(\Omega)$, the coarea formula 
\begin{align}\label{coarea}
|Du|(B) = \int_{-\infty}^\infty |D\chi_{\{u>t\}}|(B)\;dt
\end{align}
for all Borel sets $B \subset \Omega$ (see \cite[Theorem 3.40]{AFP}), 
the weak-type inequality 
$$
\sup_{t>0}t|\{x \in \mathbb{R}^d : |u(x)|>t\}|^{(d-1)/d}\lesssim |Du|(\mathbb{R}^d),
$$
and an extension argument which utilizes the assumption that $\Omega$ has a compact Lipschitz boundary \cite[Proposition 3.21 on p.~131]{AFP}.
On the other hand, \eqref{DimensionEstimates} follows from a combination of \eqref{coarea} and the more subtle identity
$|D\chi_{\{u>t\}}|(B) =\mathcal{H}^{d-1}(B\cap \partial^* \{u>t\})$
for almost every $t \in \mathbb{R}$, see \cite[(3.63) on p.~159]{AFP}.

In this paper, we prove an atomic decomposition of the space $\text{BV}(\Omega)$, which, following the theory developed in \cite{DS}, easily yields \eqref{GagliardoNirenberg} and \eqref{DimensionEstimates}. In fact, it will be useful to first establish such a result for the homogeneous counterpart on all of space,
\begin{align*}
\dot{\BV}(\mathbb{R}^d):= \left\{ u\in L^1_{loc}(\mathbb{R}^d) : Du \in M_b(\mathbb{R}^d;\mathbb{R}^d)\right\},
\end{align*}
and then deduce results for $\BV(\Omega)$ {using} an extension operator. Here, the notation $M_b(\mathbb{R}^d,\mathbb{R}^d)$ denotes the space of measures on $\mathbb{R}^d$ with values also in $\mathbb{R}^d$ equipped with the total variation norm.  To this end, we recall the definition of $\beta$-atoms introduced in \cite{DS}.
\begin{defin}
A signed measure $\mu$ of finite total variation is a $\beta$-atom if there exists a cube $Q\subset \mathbb{R}^d$ with sides parallel to the coordinate axes such that
\begin{enumerate}
\item \label{supp}$\operatorname*{supp} \mu \subset Q$\textup;
\item \label{ave} $\mu(Q) = 0$\textup;
\item $\esssup_{x \in \mathbb{R}^d, t>0} |t^{(d-\beta)/2} p_t\ast \mu (x)|  \leq \frac{1}{l(Q)^{\beta}}\label{linfty}$\textup;\quad and
\item \label{l1} $|\mu|(\mathbb{R}^d) \leq 1$.
\end{enumerate}
\end{defin}
\noindent
Here, $p_t$ denotes the heat kernel and
\begin{align*}
p_t\ast \mu(x)=\frac{1}{(4\pi t)^{d/2}} \int_{\mathbb{R}^d} e^{-|x-y|^2/4t}\;d\mu(y).
\end{align*}

Our main result is the following atomic decomposition of the space $\dot{\BV}(\mathbb{R}^d)$.
\begin{thm}\label{AtomicDecomposition}
Let $u \in \dot{\BV}(\mathbb{R}^d)$.  There exist sets of finite perimeter $\{E_{i,n}\}_{n \in \mathbb{N}, i=1,\ldots,n}$, dyadic cubes $\{Q_{i,n}\}_{n \in \mathbb{N}, i=1,\ldots,n}$ with $Q_{i_1,n} \cap Q_{i_2,n} = \emptyset$ for each $n \in \mathbb{N}$ and $i_1\neq i_2$, and scalars $\{\lambda_{i,n}\}_{n \in \mathbb{N}, i=1,\ldots,n}$ such that for each $n \in \mathbb{N}, i=1,\ldots,n,$ and $l=1,\ldots,d$  the measure
\begin{align*}
\mu^l_{i,n} = [D\chi_{E_{i,n} \cap Q_{i,n}}]_l
\end{align*}
is a $(d-1)$-atom,
\begin{align*}
[Du]_l = \lim_{n \to \infty} \sum_{i=1}^n \lambda_{i,n} \mu^l_{i,n}\
\end{align*}
in the weak--star topology of the space of measures, and
\begin{align*}
\limsup_{n\to \infty}  \sum_{i=1}^n |\lambda_{i,n}|  \lesssim |Du|(\mathbb{R}^d).
\end{align*}
\end{thm}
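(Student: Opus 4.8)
The plan is to combine two ingredients. The first is a \emph{sampled} form of the coarea formula \eqref{coarea}, which reduces the statement to a characteristic function of a single set of finite perimeter. Indeed, \eqref{coarea} gives $Du=\int_{-\infty}^{\infty}D\chi_{\{u>t\}}\,dt$ as $\mathbb R^d$-valued measures and $|Du|(\mathbb R^d)=\int_{-\infty}^{\infty}|D\chi_{\{u>t\}}|(\mathbb R^d)\,dt$, and for a.e.\ $t$ the set $E_t:=\{u>t\}$ has finite measure — by \eqref{GagliardoNirenberg}, $u$ coincides with an element of $L^{d/(d-1)}(\mathbb R^d)$ up to an additive constant, which does not affect $Du$ — and finite perimeter. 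I would fix partitions $\{t^{(n)}_k\}_k$ of the real line with mesh tending to $0$, choose admissible intermediate levels $s^{(n)}_k\in(t^{(n)}_k,t^{(n)}_{k+1})$, and approximate $Du$ by the Riemann sums $\sum_k(t^{(n)}_{k+1}-t^{(n)}_k)\,D\chi_{E_{s^{(n)}_k}}$. These converge to $Du$ in the weak-star topology — dominated convergence applied to the $L^1(dt)$ integrand $t\mapsto\int\varphi\,dD\chi_{E_t}$, for a fixed $\varphi\in C_c(\mathbb R^d)$ — and the coarea identity makes $\sum_k(t^{(n)}_{k+1}-t^{(n)}_k)\,|D\chi_{E_{s^{(n)}_k}}|(\mathbb R^d)$ converge to $|Du|(\mathbb R^d)$, hence stay $\lesssim|Du|(\mathbb R^d)$ uniformly in $n$.

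It then remains to decompose $D\chi_E$ for a set $E$ of finite perimeter and finite measure into $(d-1)$-atoms, with $|D\chi_E|(\mathbb R^d)$ as budget. Here I would use a \emph{boxing identity}: a pairwise disjoint dyadic family $\{Q_j\}$ adapted to $E$ by a stopping-time rule — subdivide until the relative content of $E$ in the cube first enters a fixed favorable band, then subdivide a bounded number of further times so that $|D\chi_{E\cap Q_j}|(\mathbb R^d)\le\varepsilon_d$ and $l(Q_j)\le c_d$ for suitable dimensional constants — for which the relative isoperimetric inequality gives $|D\chi_{E\cap Q_j}|(\mathbb R^d)\sim l(Q_j)^{d-1}$ and, crucially, $\sum_j|D\chi_{E\cap Q_j}|(\mathbb R^d)\lesssim|D\chi_E|(\mathbb R^d)$ (so the perimeter manufactured on the cube faces is controlled by the perimeter already present), whence $\#\{Q_j\}\lesssim|D\chi_E|(\mathbb R^d)$. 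The identity $D\chi_E=\sum_jD\chi_{E\cap Q_j}$, with partial sums converging weak-star, then comes from the structure theory of sets of finite perimeter: on a face shared by two cubes of the family the inner traces carry opposite normals and cancel, on a face bordering a region where $E$ has density zero the trace vanishes, and $\partial^*E$ is covered $\mathcal H^{d-1}$-a.e.\ by $\bigcup_jQ_j$. Each $\mu=[D\chi_{E\cap Q_j}]_l$ is then a $(d-1)$-atom over $Q_j$: the support and the cancellation $\mu(Q_j)=0$ (since $D\chi_F(\mathbb R^d)=0$ for compactly supported $F$ of finite perimeter) are immediate, $|\mu|(\mathbb R^d)\le\varepsilon_d\le1$, and the size axiom follows by interpolating the crude bound $|t^{1/2}p_t\ast\mu(x)|\lesssim t^{(1-d)/2}|\mu|(\mathbb R^d)$ with the cancellation bound $|t^{1/2}p_t\ast\mu(x)|\lesssim l(Q_j)\,t^{-d/2}|\mu|(\mathbb R^d)$ (first-order Taylor expansion of $p_t$ about the center of $Q_j$, using $\mu(\mathbb R^d)=0$) at $t\sim l(Q_j)^2$, the bound $l(Q_j)\le c_d$ handling small $t$; $\varepsilon_d$ and $c_d$ are chosen so that the outcome is exactly $l(Q_j)^{-(d-1)}$.

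To assemble the statement, I would apply the boxing step to each level set $E_{s^{(n)}_k}$ of the first reduction, collect the resulting cubes, sets and atoms, keep for each level of the $n$-th partition finitely many boxing cubes — enough to recover all but a vanishingly small fraction of that level's perimeter — pad with trivial (zero) atoms to reach exactly $n$ of them, and assign to each surviving atom the scalar $\lambda_{i,n}$ equal to the corresponding $t$-interval width. Then $\sum_{i=1}^n|\lambda_{i,n}|\lesssim\sum_k(t^{(n)}_{k+1}-t^{(n)}_k)\,|D\chi_{E_{s^{(n)}_k}}|(\mathbb R^d)\lesssim|Du|(\mathbb R^d)$ uniformly in $n$, giving the $\limsup$ bound, while combining the two levels of weak-star approximation yields $\sum_i\lambda_{i,n}\mu^l_{i,n}\to[Du]_l$. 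Disjointness of $\{Q_{i,n}\}_i$ at fixed $n$ is secured by handling the finitely many levels within a single dyadic grid; this reindexing is routine bookkeeping.

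The main obstacle is the boxing identity. One must produce the adapted family $\{Q_j\}$ so that, simultaneously, the total perimeter $\sum_j|D\chi_{E\cap Q_j}|(\mathbb R^d)$ of the pieces stays comparable to $|D\chi_E|(\mathbb R^d)$ — i.e.\ the face-perimeter introduced by the localization does not overwhelm the original one, which is delicate precisely where $\partial^* E$ has features on many scales — while every piece is small enough ($|D\chi_{E\cap Q_j}|(\mathbb R^d)\le\varepsilon_d$, $l(Q_j)\le c_d$) to pass the atom axioms, and the number of cubes is not inflated, so that $\sum_i|\lambda_{i,n}|$ stays within a constant multiple of $|Du|(\mathbb R^d)$. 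Reconciling these competing demands, through a stopping-time analysis built on the relative isoperimetric inequality, is where the new boxing identity does its work and is the technical heart of the argument; the coarea sampling, the verification of the atom axioms, and the final assembly are comparatively soft.
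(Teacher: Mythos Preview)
Your overall architecture—coarea sampling, then a boxing-type decomposition of each level set into dyadic pieces via a stopping time on the relative density of $E$, with the relative isoperimetric/Poincar\'e inequality delivering $\ell(Q_j)^{d-1}\lesssim|D\chi_E|(Q_j)$—is precisely the paper's strategy (the second step is Lemma~\ref{BoxingLemma}). The minor difference that you sample level sets of $u$ directly while the paper first mollifies to $C^\infty_c$ and then samples (Lemma~\ref{sampling}) is harmless.

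There is, however, a real gap in your verification of the size axiom~\eqref{linfty}. Your two heat-kernel estimates, $|t^{1/2}p_t*\mu|\lesssim t^{(1-d)/2}|\mu|(\mathbb{R}^d)$ and $|t^{1/2}p_t*\mu|\lesssim \ell(Q_j)\,t^{-d/2}|\mu|(\mathbb{R}^d)$, are \emph{both} singular as $t\to 0^+$; no interpolation between them controls the small-$t$ regime, and the claim that ``$\ell(Q_j)\le c_d$ handles small $t$'' is false. The correct mechanism, which you miss, exploits that $\mu=[D\chi_{E\cap Q_j}]_l$ is the derivative of a \emph{bounded} function: one writes $p_t*\mu=\partial_l p_t*\chi_{E\cap Q_j}$ and uses $\|\chi_{E\cap Q_j}\|_{L^\infty}\le 1$ to get $|t^{1/2}p_t*\mu|\le t^{1/2}\|\partial_l p_t\|_{L^1}\lesssim 1$ uniformly in $t>0$. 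This is the one place in the argument where the special form ``gradient of an indicator'' is actually used.

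This gap propagates into your $\ell^1$ bookkeeping. Because your size-axiom argument needs $\ell(Q_j)\le c_d$ and $|\mu|(\mathbb{R}^d)\le\varepsilon_d$, you assert these can be arranged by ``subdividing a bounded number of further times'' and that then $\#\{Q_j\}\lesssim|D\chi_E|(\mathbb{R}^d)$; neither holds in general, since the stopping cubes can occur at arbitrarily large or arbitrarily small scales. The paper sidesteps all of this by \emph{normalizing}: it takes the atom to be $[D\chi_{E\cap Q_j}]_l/\bigl(C'|D\chi_E|(Q_j)\bigr)$ and sets $\lambda_j=C'|D\chi_E|(Q_j)\cdot(\text{$t$-width})$. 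Then the size and $L^1$ axioms follow directly from the boxing inequality $\ell(Q_j)^{d-1}\lesssim|D\chi_E|(Q_j)$ with no constraint on $\ell(Q_j)$, and $\sum_j|\lambda_j|\lesssim(\text{$t$-width})\sum_j|D\chi_E|(Q_j)\le(\text{$t$-width})\,|D\chi_E|(\mathbb{R}^d)$ by disjointness of the $Q_j$, with no cube-counting required.
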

\noindent
Here, $[Du]_l$ denotes the $l^{\text{th}}$ component of the vector-valued measure $Du$ and $A \lesssim B$ means that there is a constant $C>0$, independent of $A$ and $B$, such that $A\leq CB$.

By \cite[Proposition 3.21 on p.~131]{AFP}, this implies
\begin{cor}\label{AtomicDecompositionBV}
Let $\Omega \subset \mathbb{R}^d$ be an open set with compact Lipschitz boundary and suppose $u \in \BV(\Omega)$.  
There exist sets of finite perimeter $\{E_{i,n}\}_{n \in \mathbb{N}, i=1,\ldots,n}$, dyadic cubes $\{Q_{i,n}\}_{n \in \mathbb{N}, i=1,\ldots,n}$ with $Q_{i_1,n} \cap Q_{i_2,n} = \emptyset$ for each $n \in \mathbb{N}$ and $i_1\neq i_2$, and scalars $\{\lambda_{i,n}\}_{n \in \mathbb{N}, i=1,\ldots,n}$ such that for each $n \in \mathbb{N}, i=1,\ldots,n,$ and $l=1,\ldots,d$  the measure
\begin{align*}
\mu^l_{i,n} = [D\chi_{E_{i,n} \cap Q_{i,n}}]_l
\end{align*}
is a $(d-1)$-atom,
\begin{align*}
[D\overline{u}]_l = \lim_{n \to \infty} \sum_{i=1}^n \lambda_{i,n} \mu^l_{i,n}\
\end{align*}
in the weak--star topology of the space of measures, and
\begin{align*}
\limsup_{n\to \infty}  \sum_{i=1}^n |\lambda_{i,n}|  \lesssim |Du|(\Omega) + \|u\|_{L^{1}(\Omega)},
\end{align*}
where $\overline{u} \in  \BV(\mathbb{R}^d)$ satisfies $\overline{u}=u$ in $\Omega$.
\end{cor}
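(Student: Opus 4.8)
The plan is to obtain Corollary~\ref{AtomicDecompositionBV} as an immediate consequence of Theorem~\ref{AtomicDecomposition} together with the $\BV$ extension theorem. Since $\Omega$ has compact Lipschitz boundary, \cite[Proposition 3.21 on p.~131]{AFP} furnishes a linear extension operator $u\mapsto \overline{u}$ from $\BV(\Omega)$ to $\BV(\mathbb{R}^d)$ with $\overline{u}=u$ almost everywhere in $\Omega$ and
\begin{align*}
\|\overline{u}\|_{L^1(\mathbb{R}^d)}+|D\overline{u}|(\mathbb{R}^d)\leq C(d,\Omega)\bigl(\|u\|_{L^1(\Omega)}+|Du|(\Omega)\bigr).
\end{align*}
In particular $\overline{u}\in L^1(\mathbb{R}^d)\subset L^1_{loc}(\mathbb{R}^d)$ and $D\overline{u}\in M_b(\mathbb{R}^d;\mathbb{R}^d)$, so $\overline{u}\in \dot{\BV}(\mathbb{R}^d)$ and Theorem~\ref{AtomicDecomposition} is applicable to it.

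Next I would apply Theorem~\ref{AtomicDecomposition} to $\overline{u}$. This produces sets of finite perimeter $\{E_{i,n}\}$, dyadic cubes $\{Q_{i,n}\}$ that are pairwise disjoint for each fixed $n$, and scalars $\{\lambda_{i,n}\}$ for which each $\mu^l_{i,n}=[D\chi_{E_{i,n}\cap Q_{i,n}}]_l$ is a $(d-1)$-atom, the partial sums $\sum_{i=1}^n\lambda_{i,n}\mu^l_{i,n}$ converge weak--star to $[D\overline{u}]_l$ as $n\to\infty$, and $\limsup_{n\to\infty}\sum_{i=1}^n|\lambda_{i,n}|\lesssim|D\overline{u}|(\mathbb{R}^d)$. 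Chaining this last bound with the extension estimate gives
\begin{align*}
\limsup_{n\to\infty}\sum_{i=1}^n|\lambda_{i,n}|\lesssim|D\overline{u}|(\mathbb{R}^d)\leq C(d,\Omega)\bigl(\|u\|_{L^1(\Omega)}+|Du|(\Omega)\bigr),
\end{align*}
which is precisely the asserted inequality, the implied constant now being allowed to depend on $d$ and on the Lipschitz character of $\partial\Omega$.

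I do not expect a genuine obstacle here: all of the substance is already contained in Theorem~\ref{AtomicDecomposition}, and the only point requiring care is that the extension of \cite{AFP} truly lands in $\dot{\BV}(\mathbb{R}^d)$ with total variation controlled by $\|u\|_{L^1(\Omega)}+|Du|(\Omega)$, which is exactly what \cite[Proposition 3.21 on p.~131]{AFP} provides. Two caveats are worth recording in the writeup. First, the decomposition is of $D\overline{u}$ rather than of $Du$ itself, so it is not canonical and depends on the chosen extension; on $\Omega$, however, the identity $\overline{u}=u$ shows that testing $[D\overline{u}]_l$ against functions supported in $\Omega$ recovers $[Du]_l$, so the decomposition does encode $Du$. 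Second, in contrast with Theorem~\ref{AtomicDecomposition} the implied constant is no longer purely dimensional but inherits the dependence of the extension operator on $\partial\Omega$.
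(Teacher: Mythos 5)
Your argument is correct and follows essentially the same route as the paper's own proof: extend $u$ to $\overline{u}\in\dot{\BV}(\mathbb{R}^d)$ via \cite[Proposition 3.21 on p.~131]{AFP}, apply Theorem~\ref{AtomicDecomposition} to $\overline{u}$, and chain the resulting $\ell^1$-bound with the extension estimate $|D\overline{u}|(\mathbb{R}^d)\lesssim|Du|(\Omega)+\|u\|_{L^1(\Omega)}$. Your remarks on the $\Omega$-dependence of the implied constant and on the decomposition being of $D\overline{u}$ rather than of $Du$ are accurate and consistent with how the corollary is stated.
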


By \cite[Theorem A]{DS}, Theorem \ref{AtomicDecomposition} implies the Gagliardo-Nirenberg estimate~\eqref{GagliardoNirenberg}.
\begin{cor}\label{GN}
 The inequality
\begin{align*}
\|u\|_{L^{d/(d-1)}(\Omega)}  \lesssim |Du|(\Omega) + \|u\|_{L^{1}(\Omega)}
\end{align*}
holds for all $u \in \BV(\Omega)$.
\end{cor}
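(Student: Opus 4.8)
The plan is to read off Corollary \ref{GN} from Corollary \ref{AtomicDecompositionBV} using the general principle of \cite[Theorem A]{DS}: an atomic decomposition of $[D\overline{u}]_l$ into $(d-1)$-atoms, for every $l=1,\ldots,d$, forces $\overline{u}\in L^{d/(d-1)}(\mathbb{R}^d)$ with $\|\overline{u}\|_{L^{d/(d-1)}(\mathbb{R}^d)}\lesssim\limsup_{n\to\infty}\sum_{i=1}^n|\lambda_{i,n}|$. Granting this, the corollary is immediate: Corollary \ref{AtomicDecompositionBV} (via the extension operator of \cite[Proposition 3.21]{AFP}) furnishes $\overline{u}\in\BV(\mathbb{R}^d)$ with $\overline{u}=u$ on $\Omega$ and $\limsup_{n\to\infty}\sum_{i=1}^n|\lambda_{i,n}|\lesssim|Du|(\Omega)+\|u\|_{L^1(\Omega)}$, so that
\begin{align*}
\|u\|_{L^{d/(d-1)}(\Omega)}\le\|\overline{u}\|_{L^{d/(d-1)}(\mathbb{R}^d)}&\lesssim\limsup_{n\to\infty}\sum_{i=1}^n|\lambda_{i,n}|\\
&\lesssim|Du|(\Omega)+\|u\|_{L^1(\Omega)}.
\end{align*}

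For completeness, here is the short mechanism behind \cite[Theorem A]{DS} in the borderline case $\beta=d-1$ at play. Put $F_{i,n}=E_{i,n}\cap Q_{i,n}$ and $v_n=\sum_{i=1}^n\lambda_{i,n}\chi_{F_{i,n}}$, which is bounded and compactly supported, hence in every $L^p(\mathbb{R}^d)$. Since $|\mu^l_{i,n}|(\mathbb{R}^d)\le1$ for each $l$, the perimeter obeys $|D\chi_{F_{i,n}}|(\mathbb{R}^d)\le d$, and the isoperimetric inequality gives $\|\chi_{F_{i,n}}\|_{L^{d/(d-1)}(\mathbb{R}^d)}=|F_{i,n}|^{(d-1)/d}\lesssim|D\chi_{F_{i,n}}|(\mathbb{R}^d)\lesssim1$; Minkowski's inequality then yields $\|v_n\|_{L^{d/(d-1)}(\mathbb{R}^d)}\lesssim\sum_{i=1}^n|\lambda_{i,n}|$. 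Because $Dv_n=\sum_{i=1}^n\lambda_{i,n}D\chi_{F_{i,n}}\to D\overline{u}$ weak--star as measures and $d/(d-1)>1$, a subsequence of $(v_n)$ converges weakly in $L^{d/(d-1)}(\mathbb{R}^d)$ to some $w$ with $\|w\|_{L^{d/(d-1)}(\mathbb{R}^d)}\lesssim\limsup_{n\to\infty}\sum_{i=1}^n|\lambda_{i,n}|$; the two distributional limits of $Dv_n$ must agree, so $Dw=D\overline{u}$ and hence $w=\overline{u}+c$ for a constant $c$, which is then eliminated by testing $w=\overline{u}+c$ on balls $B_R$ and using $\overline{u}\in L^1(\mathbb{R}^d)$, $w\in L^{d/(d-1)}(\mathbb{R}^d)$, and $R\to\infty$. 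Thus $\overline{u}=w$, and the bound follows.

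The substantive work is Theorem \ref{AtomicDecomposition}, not this corollary, which is purely formal once the decomposition is in hand; it is also transparent here that the target exponent $d/(d-1)$ is precisely the one produced by the isoperimetric inequality applied to the characteristic functions underlying the $(d-1)$-atoms. Within the deduction, the only point requiring care is the limiting passage --- identifying the weak $L^{d/(d-1)}$-limit of the partial sums $v_n$ with $\overline{u}$ and discarding the additive constant --- and it is precisely to have genuine $L^1(\mathbb{R}^d)$ control on $\overline{u}$ available for that step that one first passes to all of $\mathbb{R}^d$ via the extension operator, rather than working on $\Omega$ directly.
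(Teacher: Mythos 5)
Your deduction is correct, but it takes a genuinely different route from the paper's. The paper proves Corollary~\ref{GN} by invoking \cite[Theorem A]{DS}: one passes to the extension $\overline{u}$, applies Corollary~\ref{dimension_stable_embedding} to place $[D\overline{u}]_l$ in $DS_{d-1}(\mathbb{R}^d)$, uses the Besov--Lorentz estimate $\|I_1[D\overline{u}]_l\|_{\dot B^{0,1}_{d/(d-1),1}(\mathbb{R}^d)}\lesssim\|[D\overline{u}]_l\|_{DS_{d-1}(\mathbb{R}^d)}$ together with the embedding of that Besov--Lorentz space into $L^{d/(d-1)}$, and then reconstructs $\overline{u}=-\sum_l R_l I_1[D\overline{u}]_l$ via the boundedness of the Riesz transforms. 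You instead bypass the $DS_\beta$ machinery entirely and reprove, from scratch, the relevant $L^{d/(d-1)}$ consequence in the special case where the atoms are of the form $[D\chi_F]_l$: the isoperimetric inequality gives $\|\chi_{F_{i,n}}\|_{L^{d/(d-1)}}\lesssim|D\chi_{F_{i,n}}|(\mathbb{R}^d)\le d$, Minkowski bounds $\|v_n\|_{L^{d/(d-1)}}\lesssim\sum_i|\lambda_{i,n}|$, reflexivity of $L^{d/(d-1)}$ gives a weak subsequential limit $w$, the distributional identity $Dw=D\overline{u}$ forces $w=\overline{u}+c$, and the averaging argument over large balls eliminates $c$. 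All steps check out (including the additive-constant elimination, which is exactly why one needs to work with the global extension $\overline{u}\in L^1(\mathbb{R}^d)$). What the paper's route buys and yours does not is the stronger Besov--Lorentz refinement $\dot B^{0,1}_{d/(d-1),1}\subsetneq L^{d/(d-1)}$; what your route buys is an elementary, self-contained proof that does not pass through Riesz potentials, Riesz transforms, or the abstract $DS_\beta$ theory, and that makes transparent where the exponent $d/(d-1)$ comes from (it is the isoperimetric exponent applied to the characteristic functions underlying the atoms). One small point of terminology: your opening sentence attributes the mechanism to \cite[Theorem A]{DS}, but that theorem is really a statement about Riesz potentials of $DS_\beta$-measures and is proved rather differently; your second paragraph is better viewed as an independent, more elementary proof valid for the specific atoms appearing in Theorem~\ref{AtomicDecomposition}.
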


\setlength\marginparwidth{50pt}

To be more precise, \cite[Theorem A]{DS} postulates the inequality
\begin{align}\label{besov_lorentz}
    \|I_\alpha \mu\|_{\dot{B}_{d/(d-\alpha),1}^{0,1}(\mathbb{R}^d)}\lesssim \|\mu\|_{DS_\beta(\mathbb{R}^d)},
\end{align}
where the space on the left is the Besov--Lorentz space that embeds continuously into~$L^{d/(d-\alpha)}(\mathbb{R}^d)$ (for more details see the explanation after Theorem~$2$ in~\cite{Stolyarov2022}), 
\begin{align*}
DS_\beta&(\mathbb{R}^d):=\\ 
&\left\{\mu \in M_b(\mathbb{R}^d)\colon \mu = \lim_{n\to \infty}\sum_{i=1}^{n} \lambda_{i,n} \mu_{i,n},    \text{ $\mu_{i,n}$ $\beta$-atoms, } \limsup_{n \to \infty} \sum_{i=1}^n |\lambda_{i,n}|<+\infty \right\},
\end{align*}
and
\begin{align*}
I_\alpha \mu(x) = \frac{1}{\gamma(\alpha)} \int_{\mathbb{R}^d} \frac{d\mu(y)}{|x-y|^{d-\alpha}}
\end{align*}
is the Riesz potential of order $\alpha \in (0,d)$, where $\gamma(\alpha)$ is a suitable normalization constant.  
Corollary \ref{GN} then follows from \eqref{besov_lorentz}, an extension argument, and the following result, which is an immediate consequence of Theorem \ref{AtomicDecomposition}.
\begin{cor}\label{dimension_stable_embedding}
For each $l=1,\ldots,d$, the inequality
\begin{align*}
        \|[Du]_l\|_{\text{DS}_{d-1}(\mathbb{R}^d)} \lesssim |Du|(\mathbb{R}^d)
\end{align*}
holds for all $u \in \dot{\BV}(\mathbb{R}^d)$.
\end{cor}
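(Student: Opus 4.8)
The plan is to read the estimate off directly from Theorem~\ref{AtomicDecomposition}: the decomposition produced there is, by construction, an admissible atomic representation of $[Du]_l$ of exactly the kind quantified in the definition of $DS_{d-1}(\mathbb{R}^d)$. First I would recall that for $\mu\in M_b(\mathbb{R}^d)$ the quantity $\|\mu\|_{DS_\beta(\mathbb{R}^d)}$ is, by definition, the infimum of $\limsup_{n\to\infty}\sum_{i=1}^n|\lambda_{i,n}|$ over all ways of writing $\mu=\lim_{n\to\infty}\sum_{i=1}^n\lambda_{i,n}\mu_{i,n}$, with the limit taken in the weak--star topology of measures and each $\mu_{i,n}$ a $\beta$-atom.

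Next I would fix $u\in\dot{\BV}(\mathbb{R}^d)$ and $l\in\{1,\ldots,d\}$; since $Du\in M_b(\mathbb{R}^d;\mathbb{R}^d)$ we have $[Du]_l\in M_b(\mathbb{R}^d)$, so the left-hand side is meaningful. Applying Theorem~\ref{AtomicDecomposition} produces sets of finite perimeter $E_{i,n}$, dyadic cubes $Q_{i,n}$ (pairwise disjoint for each fixed $n$), and scalars $\lambda_{i,n}$ such that each $\mu^l_{i,n}=[D\chi_{E_{i,n}\cap Q_{i,n}}]_l$ is a $(d-1)$-atom, $[Du]_l=\lim_{n\to\infty}\sum_{i=1}^n\lambda_{i,n}\mu^l_{i,n}$ in the weak--star topology, and $\limsup_{n\to\infty}\sum_{i=1}^n|\lambda_{i,n}|\lesssim|Du|(\mathbb{R}^d)$. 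In particular this $\limsup$ is finite, so $\{\lambda_{i,n},\mu^l_{i,n}\}$ is an admissible competitor in the infimum defining $\|[Du]_l\|_{DS_{d-1}(\mathbb{R}^d)}$ — the additional disjointness of the cubes is simply extra structure not demanded by the definition. Taking the infimum therefore gives
\[
\|[Du]_l\|_{DS_{d-1}(\mathbb{R}^d)}\le\limsup_{n\to\infty}\sum_{i=1}^n|\lambda_{i,n}|\lesssim|Du|(\mathbb{R}^d),
\]
which is the assertion.

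The only matters needing a moment's care are notational matches rather than genuine obstacles: that $\beta=d-1$ turns the heat-kernel normalization in the definition of a $\beta$-atom into $\esssup_{x,t}|t^{1/2}p_t\ast\mu|\le l(Q)^{-(d-1)}$, which is precisely the bound stated for the $\mu^l_{i,n}$ in Theorem~\ref{AtomicDecomposition}; and that the mode of convergence in the definition of $DS_\beta$ is exactly the weak--star convergence of measures supplied by the theorem. All of the substance is already contained in Theorem~\ref{AtomicDecomposition}, so I expect no real difficulty in this deduction.
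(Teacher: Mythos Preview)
Your proposal is correct and matches the paper's own treatment exactly: the paper states that Corollary~\ref{dimension_stable_embedding} ``is an immediate consequence of Theorem~\ref{AtomicDecomposition} and the definition of the space $DS_{d-1}(\mathbb{R}^d)$,'' which is precisely the argument you give. The only caveat is that the paper does not explicitly define $\|\cdot\|_{DS_\beta}$ as an infimum over atomic representations, but this is the natural reading and your use of it is consistent with how the norm is deployed throughout.
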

\noindent

We next turn our attention to dimension estimates.  By \cite[Theorem G]{DS}, Theorem \ref{AtomicDecomposition} gives the following improved version of \eqref{DimensionEstimates} for the space $\dot{\BV}(\mathbb{R}^d)$. 
\begin{cor}\label{ac}
Let $Du \in M_b(\mathbb{R}^d)$. If $\epsilon>0$, then there exists $\delta>0$ such that $|Du|(B)<\epsilon$ whenever $B\subset \mathbb{R}^d$ is a Borel set with $\mathcal{H}^{d-1}(B)<\delta$.
\end{cor}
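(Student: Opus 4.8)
The plan is to reduce the vector-valued claim to its scalar components and then feed these into the dimension-estimate part of the theory of \cite{DS}. Since $Du$ is an $\mathbb{R}^d$-valued measure, for every Borel set $B$ one has
\[
|Du|(B) \;\le\; \sum_{l=1}^{d} |[Du]_l|(B),
\]
so it suffices to prove, for each $l=1,\dots,d$, that for every $\eta>0$ there is a $\delta_l>0$ with $|[Du]_l|(B)<\eta$ whenever $\mathcal{H}^{d-1}(B)<\delta_l$. Given $\epsilon>0$ one then applies this with $\eta=\epsilon/d$ and takes $\delta=\min_{1\le l\le d}\delta_l$.

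For the scalar statement I would invoke Corollary~\ref{dimension_stable_embedding} (an immediate consequence of Theorem~\ref{AtomicDecomposition}), which gives $[Du]_l\in\mathrm{DS}_{d-1}(\mathbb{R}^d)$ with $\|[Du]_l\|_{\mathrm{DS}_{d-1}(\mathbb{R}^d)}\lesssim|Du|(\mathbb{R}^d)$. One then applies \cite[Theorem G]{DS}, which asserts that every $\mu\in\mathrm{DS}_\beta(\mathbb{R}^d)$ has total variation $|\mu|$ \emph{uniformly} absolutely continuous with respect to $\mathcal{H}^\beta$, i.e.\ $|\mu|(B)\to0$ as $\mathcal{H}^\beta(B)\to0$ at a rate depending only on $\|\mu\|_{\mathrm{DS}_\beta(\mathbb{R}^d)}$. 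Specializing to $\beta=d-1$ and $\mu=[Du]_l$ yields the $\delta_l$ above, which finishes the proof.

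The entire difficulty lies in \cite[Theorem G]{DS}; the present paper supplies only Theorem~\ref{AtomicDecomposition}. It is worth recording why that step is not a formality. Since $\mathcal{H}^{d-1}$ is only $\sigma$-finite, uniform absolute continuity is strictly stronger than the pointwise statement $|Du|\ll\mathcal{H}^{d-1}$ recorded in \eqref{DimensionEstimates}, and it cannot be obtained atom by atom: a single $(d-1)$-atom $D\chi_{E\cap Q}$ can concentrate all of its admissible perimeter inside an arbitrarily small ball while still obeying the $L^\infty$ heat-kernel bound in the definition of a $\beta$-atom, so no uniform $(d-1)$-dimensional upper density (Frostman-type) bound survives the summation $\sum_i\lambda_{i,n}\mu_{i,n}$. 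The quantitative input that does survive is the Riesz-potential estimate~\eqref{besov_lorentz}, which controls $I_1\mu$ in a Besov--Lorentz space embedding into $L^{d/(d-1)}(\mathbb{R}^d)$; combined with the known comparability of the $\BV$-capacity of a set with its $(d-1)$-dimensional Hausdorff content, this is what upgrades plain absolute continuity to the uniform statement. A more hands-on approach --- cover $B$ by balls realizing its Hausdorff content, pass to the weak-star limit using lower semicontinuity of mass on open sets, and estimate ball by ball through an inequality of the form $|\mu|(B(x,r))\lesssim\|I_1\mu\|_{L^{d/(d-1)}(B(x,r))}$ --- stalls, because the resulting $\ell^1$ sum of $L^{d/(d-1)}$ contributions need not be small even when $B$ has tiny content; it is exactly the Lorentz/Besov refinement in~\eqref{besov_lorentz}, together with the capacitary machinery of \cite{DS}, that closes this gap. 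This is why I would rely on Theorem G rather than attempt a self-contained argument, and it is also the step I expect to be the main obstacle.
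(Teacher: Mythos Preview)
Your approach is essentially the same as the paper's: reduce to the scalar components $[Du]_l$, invoke Corollary~\ref{dimension_stable_embedding} to place them in $DS_{d-1}(\mathbb{R}^d)$, and then appeal to \cite[Theorem~G]{DS}. One small correction: as the paper notes in the Remark following Corollary~\ref{ac}, the \emph{statement} of \cite[Theorem~G]{DS} only asserts a dimension estimate; it is an examination of its \emph{proof} that yields the uniform absolute continuity you need, so you should phrase that citation accordingly.
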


\begin{rem}
While the statement of \cite[Theorem G]{DS} asserts a dimension estimate for elements of $DS_\beta(\mathbb{R}^d)$, an examination of the proof shows that the result establishes the stronger absolute continuity property recorded in Corollary \ref{ac}.  
\end{rem}

By an extension argument, one deduces
\begin{cor}\label{ac_omega}
Let $u \in \BV(\Omega)$. If $\epsilon>0$, then there exists $\delta>0$ such that $|Du|(B)<\epsilon$ whenever $B\subset \Omega$ is a Borel set with $\mathcal{H}^{d-1}(B)<\delta$.
\end{cor}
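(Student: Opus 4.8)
The plan is to deduce the statement from Corollary~\ref{ac} by means of the extension operator of \cite[Proposition 3.21 on p.~131]{AFP}, exactly as announced in the sentence preceding the statement. Fix $u\in\BV(\Omega)$ and $\epsilon>0$. Since $\Omega$ has compact Lipschitz boundary, that proposition produces $\overline{u}\in\BV(\mathbb{R}^d)$ with $\overline{u}=u$ almost everywhere in $\Omega$ and with $|D\overline{u}|(\mathbb{R}^d)+\|\overline{u}\|_{L^1(\mathbb{R}^d)}\lesssim|Du|(\Omega)+\|u\|_{L^1(\Omega)}$; in particular $D\overline{u}\in M_b(\mathbb{R}^d;\mathbb{R}^d)$, so that $\overline{u}\in\dot{\BV}(\mathbb{R}^d)$ and Corollary~\ref{ac} is applicable to $\overline{u}$. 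It furnishes a $\delta>0$, depending on $\epsilon$ and on $\overline{u}$, such that $|D\overline{u}|(B')<\epsilon$ for every Borel set $B'\subset\mathbb{R}^d$ with $\mathcal{H}^{d-1}(B')<\delta$.

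The second step is to identify $Du$ with the restriction of $D\overline{u}$ to the open set $\Omega$. For $\varphi\in C_c^\infty(\Omega)$ and $l=1,\dots,d$, the fact that $\overline{u}=u$ a.e.\ on $\Omega$ and that $\varphi$ is supported in $\Omega$ gives $\int_\Omega\varphi\,d[Du]_l=-\int_\Omega u\,\partial_l\varphi\,dx=-\int_{\mathbb{R}^d}\overline{u}\,\partial_l\varphi\,dx=\int_{\mathbb{R}^d}\varphi\,d[D\overline{u}]_l$. Since $Du$ and the restriction of $D\overline{u}$ to $\Omega$ are Radon measures on $\Omega$ that agree against every such $\varphi$, they coincide; taking total variations yields $|Du|(B)=|D\overline{u}|(B)$ for every Borel set $B\subset\Omega$. (Any extra mass that $D\overline{u}$ may carry on $\partial\Omega$ is irrelevant here, as $B\subset\Omega$.)

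Combining the two steps finishes the argument: if $B\subset\Omega$ is a Borel set with $\mathcal{H}^{d-1}(B)<\delta$, then $B$ is also a Borel subset of $\mathbb{R}^d$ of the same Hausdorff measure, whence $|Du|(B)=|D\overline{u}|(B)<\epsilon$. I do not anticipate a genuine obstacle: the only points requiring a little care are checking that the extension lies in $\dot{\BV}(\mathbb{R}^d)$, so that Corollary~\ref{ac} is legitimately applicable, and the restriction identity $|Du|(B)=|D\overline{u}|(B)$ for $B\subset\Omega$, both of which are routine. All of the substantive content is already contained in Corollary~\ref{ac}, and hence in Theorem~\ref{AtomicDecomposition} together with \cite[Theorem G]{DS}.
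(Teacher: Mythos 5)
Your argument is exactly the one the paper gives: extend $u$ to $\overline{u}\in\BV(\mathbb{R}^d)$ via \cite[Proposition 3.21 on p.~131]{AFP}, apply Corollary~\ref{ac} to $\overline{u}$, and use that $D\overline{u}$ restricted to $\Omega$ coincides with $Du$. Your added verifications (that $\overline{u}\in\dot{\BV}(\mathbb{R}^d)$ and that $|Du|(B)=|D\overline{u}|(B)$ for Borel $B\subset\Omega$) are correct and simply make explicit what the paper leaves implicit.
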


Finally, we turn our attention to trace inequalities.  By \cite[Theorem B]{DS}, we have
\begin{cor}\label{BV_trace}
If $\alpha \in (1,d)$, then 
\begin{align*}
\|I_\alpha D\overline{u}\|_{L^{1}(\nu)} \lesssim |Du|(\Omega) + \|u\|_{L^{1}(\Omega)}
\end{align*}
for $u \in \BV(\Omega)$ and every Radon measure $\nu$ such that $\nu(B(x,r))\lesssim r^{d-\alpha}$, where $\overline{u}$ is an extension of $u$ in the sense of  \cite[Definition 3.20 on p.~130]{AFP}.
\end{cor}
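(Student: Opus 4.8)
The plan is to reduce the inequality to the homogeneous setting on all of $\mathbb{R}^d$, where it becomes a formal consequence of the atomic decomposition together with the abstract trace theorem \cite[Theorem B]{DS}. First I would fix an extension $\overline{u}$ of $u$ in the sense of \cite[Definition 3.20 on p.~130]{AFP}, furnished by \cite[Proposition 3.21 on p.~131]{AFP}; since $\Omega$ has compact Lipschitz boundary this operator is bounded, so $\overline{u}$ has compact support, $D\overline{u} \in M_b(\mathbb{R}^d;\mathbb{R}^d)$, and
\begin{align*}
|D\overline{u}|(\mathbb{R}^d) + \|\overline{u}\|_{L^{1}(\mathbb{R}^d)} \lesssim |Du|(\Omega) + \|u\|_{L^{1}(\Omega)}.
\end{align*}
In particular $\overline{u} \in \dot{\BV}(\mathbb{R}^d)$, so that Theorem \ref{AtomicDecomposition}, in the quantitative form of Corollary \ref{dimension_stable_embedding}, applies to it and gives, for each $l=1,\dots,d$,
\begin{align*}
\|[D\overline{u}]_l\|_{\text{DS}_{d-1}(\mathbb{R}^d)} \lesssim |D\overline{u}|(\mathbb{R}^d).
\end{align*}

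Next I would invoke \cite[Theorem B]{DS}, which provides a trace inequality of the form $\|I_\alpha \mu\|_{L^{1}(\nu)} \lesssim \|\mu\|_{\text{DS}_\beta(\mathbb{R}^d)}$ for every scalar measure $\mu \in \text{DS}_\beta(\mathbb{R}^d)$ and every Radon measure $\nu$ with $\nu(B(x,r)) \lesssim r^{d-\alpha}$, the admissible range of parameters including $\beta = d-1$ together with $\alpha \in (d-\beta,d) = (1,d)$, which is precisely the range in the statement. Applying this to each component $\mu = [D\overline{u}]_l$ and using that $I_\alpha$ acts componentwise on the vector measure $D\overline{u}$, so that $|I_\alpha D\overline{u}(x)| \le \sum_{l=1}^d |I_\alpha [D\overline{u}]_l(x)|$ for $\nu$-a.e.\ $x$, one chains the estimates
\begin{align*}
\|I_\alpha D\overline{u}\|_{L^{1}(\nu)} \le \sum_{l=1}^d \|I_\alpha [D\overline{u}]_l\|_{L^{1}(\nu)} \lesssim \sum_{l=1}^d \|[D\overline{u}]_l\|_{\text{DS}_{d-1}(\mathbb{R}^d)} \lesssim |D\overline{u}|(\mathbb{R}^d) \lesssim |Du|(\Omega) + \|u\|_{L^{1}(\Omega)},
\end{align*}
which is the asserted bound.

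I do not anticipate a substantial analytic obstacle, since the corollary is genuinely a formal consequence of Theorem \ref{AtomicDecomposition} and \cite[Theorem B]{DS}; the only points that need care are bookkeeping ones. One should confirm that the parameter $\beta = d-1$ coming from the decomposition lies in the range for which \cite[Theorem B]{DS} is stated and that this forces exactly $\alpha \in (1,d)$; one should note that $I_\alpha D\overline{u}$, understood with the standard convention for the Riesz potential of a compactly supported signed measure, is automatically finite $\nu$-almost everywhere once the right-hand side is finite, so no separate well-definedness argument is needed; and one should track that every implicit constant depends only on $d$, $\alpha$, $\Omega$, and the constant in the growth bound on $\nu$, which is the case because each cited inequality is uniform in the relevant data. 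If \cite[Theorem B]{DS} is already formulated for vector-valued measures, the componentwise reduction in the second step may be omitted.
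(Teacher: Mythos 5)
Your proposal is correct and follows essentially the same route as the paper: extend $u$ to $\overline{u}\in\dot{\BV}(\mathbb{R}^d)$ via \cite[Proposition 3.21]{AFP}, note that Theorem \ref{AtomicDecomposition} (in the form of Corollary \ref{dimension_stable_embedding}) places each component $[D\overline{u}]_l$ in $DS_{d-1}(\mathbb{R}^d)$, apply \cite[Theorem B]{DS} with $\beta=d-1$ and $\alpha\in(1,d)$, and then chain with the extension inequality. The paper's written proof is just a one-line pointer to these same ingredients, and your componentwise bookkeeping and parameter check $\alpha>d-\beta=1$ are the appropriate details.
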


The questions of Sobolev inequalities, dimension estimates, and trace inequalities are of interest for more general differential operators than~$\nabla$, for example, for the symmetric gradient or exterior derivative acting on $k$-forms for $k=1,\ldots d-1$.  
An early result in this direction is
the Korn--Sobolev inequality established by M.J. Strauss \cite{Strauss}*{p.\thinspace{}208} in 1971:  
\begin{align}\label{Strauss}
\|u \|_{L^{d/(d-1)}(\mathbb{R}^d)} \lesssim \| Eu \|_{L^1(\mathbb{R}^d;\mathbb{R}^d)}
\end{align}
for all vector fields $u \in C^\infty_c(\mathbb{R}^d,\mathbb{R}^d)$, where
\begin{align*}
    Eu:=   \tfrac{1}{2}\bigl(\nabla u + (\nabla u)^T)
\end{align*}
is the symmetric gradient.  This improves Gagliardo and Nirenberg's inequality for vector-valued functions, as
\begin{align*}
|Eu| \leq |\nabla u|
\end{align*}
with a failure of both the reverse inequality and the weaker integrated version
\begin{align*}
\| \nabla u \|_{L^1(\mathbb{R}^d;\mathbb{R}^d)} \not\lesssim \| Eu \|_{L^1(\mathbb{R}^d;\mathbb{R}^d)},
\end{align*}
the latter being an example of the celebrated non-inequality of Ornstein \cite{Ornstein1962}.

The inequality \eqref{Strauss} was a lone result in the world of vector-valued inequalities in $L^1$ that remained unexplored for several decades until interest was renewed through the establishment of further inequalities in this spirit by J. Bourgain and H. Brezis in their pioneering works \cite{BourgainBrezis2004, BourgainBrezis2007}.  In these papers, Bourgain and Brezis established results for Hodge systems with $L^1$ data, a principal example being the div-curl system in three variables
\begin{align*}
\begin{cases}\operatorname*{curl} Z &= F, 
\\
 \operatorname*{div} Z &= 0, 
 \end{cases}
\end{align*}
for which they established the surprising $L^1$ estimate
\begin{align}\label{Sobolev}
\|Z\|_{L^{3/2}(\mathbb{R}^3;\mathbb{R}^{3})} \lesssim \|F\|_{L^1(\mathbb{R}^3;\mathbb{R}^3)}.
\end{align}
The inequality \eqref{Sobolev} is the proverbial face that launched a thousand ships, as it led to questions of simpler proofs, extensions, applications, and refinements; see e.g.~\cite{BousquetVanSchaftingen2014, CVSYu2017, GRV2024, HernandezSpector2024, LanzaniStein2005, Mazya2007, Mazya2010, RSS, SpectorVanSchaftingen2019, Stolyarov2021, Stolyarov2022,  VanSchaftingen2004, VanSchaftingen2004one, VanSchaftingen2013} and also the expository articles~\cite{Spector2020, VanSchaftingen2014, VanSchaftingen2024}.

A natural question following the work of Bourgain and Brezis was to understand the extent to which such inequalities are valid.  More precisely, one wonders whether there is a characterization of the differential operators that support a Sobolev inequality in~$L^1$.  This question was answered by J. Van Schaftingen in \cite{VanSchaftingen2013} through the introduction of an algebraic condition: a homogeneous constant coefficient linear differential operator $ A \colon C^\infty_c(\mathbb{R}^d,\mathbb{R}^l) \to C^\infty_c(\mathbb{R}^d,\mathbb{R}^m)$ is said to be cancelling if
\begin{align}\label{cancelling}
 \bigcap_{\xi\in\mathbb{R}^d \setminus \{0\}}\  A(\xi)[\mathbb{R}^l]=\{0\},
\end{align}
where we identify $A$ with its symbol.  The significance of this cancelling condition can be understood by Van Schaftingen's result that for a $k^{\text{th}}$ order homogeneous constant coefficient linear differential operator ${A}$: the inequality
\begin{equation}\label{CocancellingInequality}
    \|\nabla^{k-1}f\|_{L^{d/(d-1)}(\mathbb{R}^d;\mathbb{R}^{N})}\lesssim \|Af\|_{L^1(\mathbb{R}^d;\mathbb{R}^m)}
\end{equation}
holds for all $f \in C^\infty_c(\mathbb{R}^d;\mathbb{R}^l)$ if and only if ${A}$ is elliptic and cancelling.  

At first appearance, the cancellation condition \eqref{cancelling} and its relation to the inequality \eqref{CocancellingInequality} is somewhat mysterious, though upon further examination one understands that the cancellation condition is equivalent to the assertion that $A$ does not have a fundamental solution, i.e. if
\begin{align*}
Af = e\delta_0
\end{align*}
for some $f \in \mathcal{S}'(\mathbb{R}^d;\mathbb{R}^l)$ and $e \in \mathbb{R}^m$, then $e=0$.  This ensures that the standard counterexample for the corresponding inequalities for Riesz potentials on $L^1$ are not admissible fields in the vector inequalities.  This is a first assertion towards a connection between measures {satisfying} an algebraic condition such as \eqref{cancelling} and the dimension of $A$, the smallest sets that measures of the form $Af$ can charge.  In particular, for a cancelling differential operator, one has
\begin{align*}
|Af|  \ll \mathcal{H}^1,
\end{align*}
where here and in the sequel $\mathcal{H}^\gamma$ denotes the Hausdorff measure of dimension $\gamma \in [0,d]$.  This leads one to ask 
\begin{openquestion}\label{dimension}
Suppose $A$ is a $k^{\text{th}}$ order homogeneous constant coefficient cancelling linear differential operator.  What is the largest value of $\beta=\beta(A) \in (0,d]$ such that
\begin{align*}
|Af|  \ll \mathcal{H}^\beta
\end{align*}
for every $f \in L^1_{loc}(\mathbb{R}^d;\mathbb{R}^l)$ such that $Af \in M_b(\mathbb{R}^d;\mathbb{R}^m)$? 
\end{openquestion}
The study of the connection between an algebraic condition on a vector-valued differential operator and the dimension of the associated space of measures emerged independently of the investigation of the validity of Sobolev inequalities, seemingly with its first systematic treatment in~\cite{RoginskayaWojciechowski2006} and later development in~\cite{APHF2019, Ayoush2023, AyoushWojciechowski2017, Dobronravov2024, Stolyarov2023, StolyarovWojciechowski2014}.  For general $A$, that $\beta \geq 1$ follows from the analysis in ~\cite{RoginskayaWojciechowski2006} and this result is sharp for $A=\operatorname*{div}: C^\infty_c(\mathbb{R}^d;\mathbb{R}^d) \to C^\infty_c(\mathbb{R}^d)$.  This suggests that it may be interesting to pose a refinement of the question where one imposes further algebraic structure on $A$ and asks for corresponding values of $\beta$ which are sharp with respect to the hypothesis.  This was the viewpoint taken in \cite{APHF2019}, where lower bounds with $\beta>1$ were obtained for operators whose $\ell$-wave cone is empty, while later it was shown in~\cite{Ayoush2023} that those bounds were not sharp {for some sophisticated~$A$. } Further consideration of these ideas prompts
\begin{openquestion}\label{dimension_question}
Suppose $A$ is a $k^{\text{th}}$ order homogeneous constant coefficient cancelling linear differential operator which is $\ell$-cancelling, i.e.~
\begin{align*}
    \bigcap_{\substack{W \subseteq \mathbb{R}^d\\ \dim W = \ell}}
      \linspan 
        \,
        \bigl\{ 
          A (\xi)[v] 
        \st 
          \xi \in W 
          \text{ and } 
          v \in \mathbb{R}^l
        \bigr\}
  =
    \{0\}
  .
\end{align*}
{Does it hold} that
\begin{align*}
|Af|  \ll \mathcal{H}^\ell
\end{align*}
for every $f \in L^1_{loc}(\mathbb{R}^d;\mathbb{R}^l)$ such that $Af \in M_b(\mathbb{R}^d;\mathbb{R}^m)$? 
\end{openquestion}
\noindent
The $\ell$-cancelling condition appeared in \cite{SpectorVanSchaftingen2019}, see also \cite{Raita_report}.

A resolution, or even partial resolution, of Open Question \ref{dimension_question} suggests one to consider whether one has trace inequalities analogous to \eqref{BV_trace} for a given operator $A$, where one requires $\alpha>\alpha_0(A)$.  One poses
\begin{openquestion}\label{trace_inequality_oq}
Suppose $A$ is a $k^{\text{th}}$ order homogeneous constant coefficient cancelling linear differential operator.  {Does it hold} that
\begin{align*}
\|I_\alpha Af\|_{L^{1}(\nu)} \lesssim \|Af\|_{L^1(\mathbb{R}^d;\mathbb{R}^m)}
\end{align*}
for all $Af \in L^1(\mathbb{R}^d;\mathbb{R}^m)$ and every Radon measure $\nu$ such that $\nu(B(x,r))\lesssim r^{d-\alpha}$ for $\alpha \in (\alpha_0(A),d]$ for some $\alpha_0(A) \in (d-\beta(A),d]$?
\end{openquestion}
\noindent
A similar open question for a dual class of fields which satisfy a co-cancelling constraint has been raised in work of two of the authors and {B.} Rai\c t\u a \cite[Open Question 1.1]{RSS}.  One expects that the endpoint estimate $\alpha=d-\beta$ may also require the introduction of a singular integral operator as in \cite[Conjecture 1.6]{RSS}. 

One observes that at the endpoint $\alpha_0(A) =d-\beta(A)$, the left-hand-side of the inequality in Open Question \ref{trace_inequality_oq} is
\begin{align*}
\int_{\mathbb{R}^d} |AI_{d-\beta} f(x)| d\nu(x),
\end{align*}
a quantity slightly smaller than the Riesz mutual energy
\begin{align*}
\mathcal{E}(|Af|,\nu):=\int_{\mathbb{R}^d} \int_{\mathbb{R}^d} \frac{d|Af|(y)d\nu(x)}{|x-y|^{\beta}},
\end{align*}
which, when evaluated on the diagonal~$\nu = |Af|$, is connected to the Fourier dimension of its argument; see Section~$3.5$ in~\cite{Mattila}.  As the Fourier dimension controls the Hausdorff dimension, this suggests that trace estimates may be useful in deriving partial results toward the (probably more difficult) critical dimension estimate.

A fundamental difficulty concerning these questions is that at present there are insufficiently many available tools for complete resolutions in this more general setting.  The consideration of these questions led the first and third author to introduce dimension stable spaces of measures in \cite{DS}.  These spaces $DS_\beta(\mathbb{R}^d)$ have the property that they admit all Besov--Lorentz scale Sobolev inequalities, their elements are absolutely continuous with respect to $\mathcal{H}^\beta$, and admit trace estimates for $\alpha>d-\beta$. 
 It is natural to unify {Open Questions }~\ref{dimension_question} and~\ref{trace_inequality_oq} into one using the spaces~$DS_\beta(\mathbb{R}^d)$.
 { \begin{openquestion}\label{space_question}
Suppose $A$ is a $k^{\text{th}}$ order homogeneous constant coefficient cancelling linear differential operator which is $\ell$-cancelling.  {Does it hold} that
\begin{align*}
\|[Af]_l\|_{DS_{\ell}(\mathbb{R}^d)}\lesssim \|Af\|_{L^1(\mathbb{R}^d;\mathbb{R}^m)}
\end{align*}
{for all $Af \in L^1(\mathbb{R}^d;\mathbb{R}^m)$ and each $l=1,\ldots,m$?}
\end{openquestion}}
\noindent
For divergence-free measures on $\mathbb{R}^d$, i.e. $A=d^*$, the co-exterior derivative acting on $2$-currents, an affirmative answer to this question was already known with the introduction of the spaces \cite[Example 1.10]{DS}.  However, a point left open in that work was the embedding of $\dot{\BV}(\mathbb{R}^d)$ into $DS_{d-1}(\mathbb{R}^d)$, which was the initial impetus for this paper.  In particular, as noted, our Corollary \ref{dimension_stable_embedding} is an immediate consequence of Theorem \ref{AtomicDecomposition}, though the conclusion is slightly weaker:  The atoms given in Theorem \ref{AtomicDecomposition} have more structure than {those of} $DS_{d-1}(\mathbb{R}^d)$, with the former given by weak derivatives of characteristic functions, related to the fact that Theorem \ref{AtomicDecomposition} is not only a decomposition, but in fact a characterization, of $\dot{\BV}(\mathbb{R}^d)$.

Theorem \ref{AtomicDecomposition} is argued by density, a sampling of the coarea formula, and our
\begin{lemma}\label{BoxingLemma}
    Let $\mathcal{D}$ be a system of dyadic cubes in $\mathbb{R}^d$. If $U\subseteq\mathbb{R}^d$ is open, bounded, and of finite perimeter, then there exist $\{Q_j\}_{j=1}^{\infty} \subseteq \mathcal{D}$ and $C>0$ such that 
    $$
    \chi_U = \sum_{j=1}^\infty \chi_{U\cap Q_j}
    $$
    and
    $$
        \ell(Q_j)^{n-1} \leq C|D\chi_U|(Q_j)
    $$
    for all $j \in \mathbb{N}$.
\end{lemma}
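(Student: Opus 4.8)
The plan is to run a Calder\'on--Zygmund-type stopping time on the Lebesgue density of $U$, choosing the stopping threshold small enough that the \emph{selected} cubes, not merely their dyadic parents, carry a definite proportion of both $U$ and of its complement; the relative isoperimetric inequality on a cube is then applied directly on each selected cube. This is the two-sided dyadic analogue of Gustin's boxing inequality, and it also produces the identity $\chi_U=\sum_j\chi_{U\cap Q_j}$ for free.

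Fix a threshold $\lambda\in(0,2^{-d-1})$, say $\lambda=2^{-d-2}$. For $x\in U$, let $\mathcal{C}_x\subseteq\mathcal{D}$ be the chain of dyadic cubes containing $x$, ordered by inclusion (order-isomorphic to $\mathbb{Z}$). Because $U$ is open, every sufficiently small cube of $\mathcal{C}_x$ is contained in $U$, so $\{R\in\mathcal{C}_x\st |U\cap R|>\lambda|R|\}$ is nonempty; because $U$ is bounded, $|U\cap R|/|R|\to 0$ along $\mathcal{C}_x$ as $\ell(R)\to\infty$, so this set is bounded above in $\mathcal{C}_x$ and hence has a largest element, which I denote $Q(x)$. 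If $Q(x)\cap Q(y)\neq\emptyset$ then, two dyadic cubes being nested whenever they meet, we may assume $Q(x)\subseteq Q(y)$; but $Q(y)\in\mathcal{C}_x$ and $|U\cap Q(y)|>\lambda|Q(y)|$, so maximality of $Q(x)$ forces $Q(y)\subseteq Q(x)$, i.e.\ $Q(x)=Q(y)$. Thus $\{Q(x)\st x\in U\}$ is a subfamily of the countable set $\mathcal{D}$ consisting of pairwise disjoint cubes which covers every point of $U$. Enumerating it as $\{Q_j\}_j$, every point of $\mathbb{R}^d$ lies in at most one $Q_j$ and lies in one precisely when it belongs to $U$, so $\chi_U=\sum_j\chi_{U\cap Q_j}$.

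For the size bound, fix $j$ and write $Q_j=Q(x_0)$ for the point $x_0\in U$ from which it was selected; let $\widehat{Q}_j\in\mathcal{D}$ be its dyadic parent. Since $\widehat{Q}_j\in\mathcal{C}_{x_0}$ strictly contains $Q(x_0)$, maximality gives $|U\cap\widehat{Q}_j|\le\lambda|\widehat{Q}_j|$; as $U\cap Q_j\subseteq U\cap\widehat{Q}_j$ and $|\widehat{Q}_j|=2^{d}|Q_j|$, this gives
\[
\lambda<\frac{|U\cap Q_j|}{|Q_j|}\le 2^{d}\lambda<\tfrac12 .
\]
Hence $\min\bigl(|U\cap Q_j|,|Q_j\setminus U|\bigr)\ge\lambda\,\ell(Q_j)^{d}$, and the relative isoperimetric inequality on the cube $Q_j$ (in the form $\min(|E\cap Q|,|Q\setminus E|)^{(d-1)/d}\le C(d)\,|D\chi_E|(Q)$, the constant being dimensional by scaling; see \cite{AFP}) applied with $E=U$ yields
\[
\ell(Q_j)^{d-1}=\lambda^{-(d-1)/d}\bigl(\lambda\,\ell(Q_j)^{d}\bigr)^{(d-1)/d}\le\lambda^{-(d-1)/d}\,C(d)\,|D\chi_U|(Q_j),
\]
which is the claim with $C=C(d)$.

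The step on which everything hinges is the calibration of $\lambda$: the ``obvious'' choice $\lambda=\tfrac12$ would control the density of $U$ only in the parent $\widehat{Q}_j$, and the perimeter of $U$ seen by $\widehat{Q}_j$ could concentrate on $\partial Q_j$ or inside $\widehat{Q}_j\setminus Q_j$, leaving $|D\chi_U|(Q_j)$ possibly far smaller than $\ell(Q_j)^{d-1}$. Taking $\lambda<2^{-d-1}$ repairs this, because passing from $\widehat{Q}_j$ to its child multiplies the density by at most $2^{d}$, keeping it below $\tfrac12$, and this upper bound on the density \emph{of $U$ in $Q_j$ itself} is exactly what makes the relative isoperimetric inequality usable on $Q_j$. (Consistently with the bound, no $Q_j$ can lie in $U$, since $|Q_j\setminus U|\ge\tfrac12|Q_j|>0$ while $|D\chi_U|$ vanishes on open subsets of $U$; and if the selected family happens to be finite, one simply terminates the enumeration.)
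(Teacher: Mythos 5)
Your proof is correct, and it takes a genuinely different route from the paper's.

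Both arguments are Calder\'on--Zygmund stopping times on the Lebesgue density of $U$, but the details diverge. The paper stops ``from below'' at the threshold $\tfrac12$: for each $x\in U$ it finds the smallest dyadic cube $R_x$ with $|U\cap R_x|/|R_x|\ge\tfrac12$ whose parent has density $<\tfrac12$, takes the collection of maximal parents $\widehat{R}_x$ as the $Q_j$, and then --- because it only controls the density of a \emph{child} from below and of $Q_j$ itself from above --- must appeal to an intermediate value argument to produce an auxiliary cube $\widetilde{Q}_j\subsetneq Q_j$ with $\ell(Q_j)\le 2\ell(\widetilde{Q}_j)$ and density exactly $\tfrac12$, on which it applies a Poincar\'e inequality for $\BV$. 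Your approach instead stops ``from above'' at a calibrated small threshold $\lambda<2^{-d-1}$, selecting the \emph{largest} cube in the chain with density $>\lambda$. The doubling of side length in passing from a cube to its parent then gives the two-sided bound $\lambda<|U\cap Q_j|/|Q_j|\le 2^d\lambda<\tfrac12$ directly on the selected cube $Q_j$, and the relative isoperimetric inequality applies on $Q_j$ itself. This eliminates the continuity/IVT interpolation step entirely and also makes the decomposition $\chi_U=\sum_j\chi_{U\cap Q_j}$ immediate from the usual disjointness of maximal dyadic selections. (The Poincar\'e inequality used in the paper and the relative isoperimetric inequality you invoke are equivalent when tested on a characteristic function, so that substitution is not a substantive difference.) In short: same strategy, but your calibrated threshold buys a cleaner argument with one fewer moving part.

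Two small corrections. First, the sentence ``every point of $\mathbb{R}^d$ lies in at most one $Q_j$ and lies in one precisely when it belongs to $U$'' is overstated: a point of $\mathbb{R}^d\setminus U$ may well lie in some $Q_j$, since $|Q_j\setminus U|>0$ by construction. What you actually need, and what your construction does give, is that every point of $U$ lies in exactly one $Q_j$; that alone yields $\chi_U=\sum_j\chi_{U\cap Q_j}$. Second, your final display shows $\ell(Q_j)^{d-1}\le\lambda^{-(d-1)/d}C(d)\,|D\chi_U|(Q_j)$, so the constant in the lemma is $\lambda^{-(d-1)/d}C(d)$ rather than $C(d)$; harmless, since the statement only asserts existence of some $C$. (You are also right to read the exponent $n-1$ in the statement as $d-1$; that is a typo in the source, as $n$ is never introduced.)
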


Lemma \ref{BoxingLemma} is a dyadic version of the so-called boxing inequality of W. Gustin \cite{G1960}.  In fact, it is perhaps more in the spirit of Gustin's work than many of the subsequent results on boxing inequalities \cite{AN2023,DGN1998,Federer,HL2013,KKST2008,PS2020} which utilize balls in their statement of the inequality, as the name boxing inequality refers to the use of cubes in his original paper.
Beyond its application toward the proof of Theorem \ref{AtomicDecomposition}, Lemma \ref{BoxingLemma} is interesting in its own right, as instead of covering an open set with other sets with desirable properties, it provides an identity between an open set of finite perimeter and a countable union of disjoint sets with such properties, which may be useful toward the deduction of further results for $\BV(\Omega)$/$\dot{\BV}(\mathbb{R}^d)$ functions.

The plan of the paper is as follows.  In Section \ref{lemma} we prove a lemma sampling the coarea formula which will be useful in our proof of Theorem \ref{AtomicDecomposition}.  In Section \ref{proofs} we prove Lemma \ref{BoxingLemma}, Theorem \ref{AtomicDecomposition}, and the corollaries.

\section{A Sampling Lemma}\label{lemma}

\begin{lemma}\label{sampling}
Let~$u\colon\mathbb{R}^d \to \mathbb{R}$ be a smooth compactly supported function. There exists a sequence of finite collections of open sets~$\{\Omega_{n,i}\}_{n\in \mathbb{N},i=1,\ldots,n}$ and scalars~$\{a_{n,i}\}_{n\in \mathbb{N},i=1,\ldots,n}$ such that
 one has
\begin{align}\label{weak}
\int_{\mathbb{R}^d} \Phi \cdot \nabla u \;dx = \lim_{n\to \infty} \sum_{i=1}^n a_{n,i} \int_{\mathbb{R}^d}  \Phi \cdot  dD \chi_{\Omega_{n,i}}
\end{align}
for every $\Phi \in C_0(\mathbb{R}^d;\mathbb{R}^d)$ and, moreover,
\begin{align}\label{ell_1}
\limsup_{n\to \infty} \sum_{i=1}^n |a_{n,i}| |D\chi_{\Omega_{n,i}}|(\mathbb{R}^d) \leq \|\nabla u\|_{L^1(\mathbb{R}^d)}.
\end{align}
\end{lemma}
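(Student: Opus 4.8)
The idea is to discretize the coarea formula \eqref{coarea} by sampling the level sets of $u$ at finitely many heights, in such a way that as the sampling becomes finer the Riemann-type sums converge weakly to $\nabla u$ and the corresponding $\ell^1$ sum of coefficients converges to $\|\nabla u\|_{L^1}$. Since $u$ is smooth and compactly supported, the coarea formula reads $\int_{\mathbb{R}^d}\Phi\cdot\nabla u\,dx = \int_{-\infty}^{\infty}\left(\int_{\mathbb{R}^d}\Phi\cdot dD\chi_{\{u>t\}}\right)dt$ for every $\Phi\in C_0(\mathbb{R}^d;\mathbb{R}^d)$, which is the scalar-valued version of \eqref{coarea} applied componentwise together with Fubini; simultaneously $\|\nabla u\|_{L^1(\mathbb{R}^d)} = \int_{-\infty}^{\infty}|D\chi_{\{u>t\}}|(\mathbb{R}^d)\,dt$. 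So the whole task is to approximate these two integrals in $t$ by finite sums.

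First I would fix, for each $n$, a uniform partition of a large interval $[-M_n,M_n]$ (with $M_n\to\infty$, $M_n$ chosen with $[-M_n,M_n]\supset$ range of $u$ for all $n$ large, so in fact $M_n$ can be taken constant since $u$ is bounded) into $n$ subintervals of length $\delta_n = 2M_n/n$, with sample points $t_{n,i}$. I would set $\Omega_{n,i} := \{x : u(x) > t_{n,i}\}$, which is open (by smoothness of $u$) and of finite perimeter (a.e.\ $t$, and one may perturb $t_{n,i}$ slightly to land on such a value — smoothness plus Sard's theorem makes almost every level set a smooth hypersurface, hence of finite perimeter; alternatively invoke that $\{u>t\}$ has finite perimeter for a.e.\ $t$ directly from the coarea formula), and $a_{n,i} := \delta_n$. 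Then $\sum_{i=1}^n a_{n,i}\int_{\mathbb{R}^d}\Phi\cdot dD\chi_{\Omega_{n,i}}$ is a Riemann sum for the function $t\mapsto g_\Phi(t):=\int_{\mathbb{R}^d}\Phi\cdot dD\chi_{\{u>t\}}$, and $\sum_{i=1}^n |a_{n,i}||D\chi_{\Omega_{n,i}}|(\mathbb{R}^d)$ is a Riemann sum for $t\mapsto h(t):=|D\chi_{\{u>t\}}|(\mathbb{R}^d) = \mathcal{H}^{d-1}(\partial^*\{u>t\})$.

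The convergence \eqref{weak} then follows once I check that $g_\Phi$ is Riemann integrable on $[-M,M]$ (e.g.\ that it is bounded with at most countably many discontinuities, or argue directly via dominated convergence after passing to the a.e.-defined regular values): indeed $|g_\Phi(t)|\le \|\Phi\|_\infty h(t)$ and $\int h(t)\,dt = \|\nabla u\|_{L^1}<\infty$, so $g_\Phi\in L^1$; to upgrade $L^1$ Riemann sums to genuine convergence I would instead choose the sample heights $t_{n,i}$ to be continuity/regularity points (possible for a.e.\ height) and note that for smooth $u$ the map $t\mapsto \{u>t\}$ varies continuously in the relevant sense off a countable set, giving Riemann integrability; alternatively, replace the crude Riemann sum by $a_{n,i}$ = (length of the $i$-th subinterval) and $t_{n,i}$ = any regular value therein, and invoke the Lebesgue differentiation/averaging theorem to get $\sum_i a_{n,i} g_\Phi(t_{n,i}) \to \int g_\Phi$ along a.e.\ choice of sample points — this is the cleanest route. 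For \eqref{ell_1} the same Riemann-sum argument applied to $h\ge 0$ gives $\limsup_n \sum_i |a_{n,i}| h(t_{n,i}) \le \int_{-M}^{M} h(t)\,dt \le \|\nabla u\|_{L^1(\mathbb{R}^d)}$ — note we only need $\limsup \le$, not equality, so even without full Riemann integrability a Fatou-type lower-semicontinuity argument (or simply choosing the partition adapted to $h$) suffices.

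The main obstacle is the measurability/regularity bookkeeping needed to guarantee that the level sets $\{u>t_{n,i}\}$ are simultaneously open \emph{and} of finite perimeter for the chosen sample heights, and that the Riemann sums actually converge (not merely are bounded) — the coarea formula gives information for \emph{a.e.}\ $t$, while we need specific $t$'s. I expect to resolve this by: (i) using smoothness of $u$ so that every superlevel set is open; (ii) choosing sample heights among the a.e.-full set of $t$ for which $\{u>t\}$ has finite perimeter and $|D\chi_{\{u>t\}}|(\mathbb{R}^d)=\mathcal{H}^{d-1}(\partial^*\{u>t\})$; and (iii) phrasing the limit via the a.e.-convergence of Riemann sums of an $L^1$ function (averaging over the offset of the partition), which only costs us passing to a subsequence in $n$ — harmless for the statement. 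Everything else is routine: linearity and continuity of $\Phi\mapsto\int\Phi\cdot dD\chi$, Fubini to split the coarea identity against $\Phi$, and the elementary estimate $|g_\Phi|\le\|\Phi\|_\infty h$.
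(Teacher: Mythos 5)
Your overall strategy — discretize the coarea formula via Riemann sums in the height variable $t$, taking $\Omega_{n,i}=\{u>t_{n,i}\}$ and $a_{n,i}$ equal to the partition increment — is exactly the paper's strategy, and your handling of the $\ell^1$ bound \eqref{ell_1} is close in spirit. But there are two places where your outline would not close, and where the paper's proof does something genuinely different.

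First, the main gap: your argument for \eqref{weak} tries to show Riemann-sum convergence for each function $g_\Phi(t)=\int\Phi\cdot dD\chi_{\{u>t\}}$, and the workaround you propose (Lebesgue averaging over the partition offset, passing to subsequences) produces, at best, sample heights that are good for one $\Phi$ at a time. The statement requires a single sequence $\{t_{n,i}\}$ that works for \emph{every} $\Phi\in C_0(\mathbb{R}^d;\mathbb{R}^d)$, an uncountable family, so an a.e.-choice argument run separately over each $\Phi$ does not terminate. The paper sidesteps this entirely: it establishes \eqref{ell_1} \emph{first}, which gives a uniform total-variation bound on the measures $\sum_i a_{n,i}D\chi_{\Omega_{n,i}}$; by that bound it suffices to verify \eqref{weak} on a dense subset of test fields, namely $\Phi\in C^1_c$; and for such $\Phi$ one integrates by parts, $\int\Phi\cdot dD\chi_{\Omega_{n,i}}=-\int_{\Omega_{n,i}}\Div\Phi$, reducing \eqref{weak} to the pointwise (and dominated) convergence $\sum_i(t_{n,i+1}-t_{n,i})\chi_{\Omega_{n,i}}\to u$. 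This last convergence has nothing to do with $\Phi$, so the same sequence of heights works for all $\Phi$. If you want to avoid the integration-by-parts step, you could instead invoke separability of $C_0$ and check convergence on a countable dense set, but the paper's route is shorter and does not need any regularity of $g_\Phi$.

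Second, a smaller but real flaw: for \eqref{ell_1} you write that "a Fatou-type lower-semicontinuity argument" suffices because only $\limsup\le$ is needed. Fatou goes the wrong way here — it bounds $\liminf$ of integrals from \emph{below}, and it gives no upper bound on Riemann sums of a nonnegative $L^1$ function that is not Riemann integrable (such sums can diverge if sample points land where $h$ is large). Your alternative suggestion ("choose the partition adapted to $h$") can be made to work by selecting $t_{n,i}$ with $h(t_{n,i})\le(1+\epsilon_n)\fint_{I_{n,i}}h$, but then these constraints compete with whatever constraints you'd want for \eqref{weak}, which is precisely the simultaneity problem above. The paper instead shows that $t\mapsto\mathcal{H}^{d-1}(\{u=t\})$ is genuinely (improperly) Riemann integrable, using Sard's theorem together with the implicit function theorem to get continuity of this map off a negligible set of critical values; then a single sequence of partitions achieves both convergence of the Riemann sums for $h$ and the mesh condition used in the dominated-convergence step. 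I would encourage you to adopt this order of steps — $\ell^1$ bound via Sard, then density, integration by parts, and dominated convergence — rather than trying to directly control $g_\Phi$.
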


\begin{proof}
We begin by recalling the coarea formula for smooth functions with summable gradient:
\begin{equation}
\label{Coarea}
\int_{\mathbb{R}^d} |\nabla u|\;dx = \int\limits_\mathbb{R}\mathcal{H}^{d-1}\Big(\set{x\in \mathbb{R}^d}{u(x) = t}\Big)\,dt.
\end{equation}
By Sard's theorem, for almost all~$t$, the set~$E_t = \set{x\in \mathbb{R}^d}{u(x) = t}$ does not contain any critical points of~$u$. Using the implicit function theorem and the property that~$u$ is compactly supported, we have that the function~$t\mapsto \mathcal{H}^{d-1}(E_t)$ is continuous at such points. Therefore, the integral on the right hand side of~\eqref{Coarea} is an improper Riemann integral. Thus, there exist increasing sequences~$\{t_{n,i}\}_{n\in \mathbb{N},i=1,\ldots,n}$ such that
\begin{align*}
\max_{i=1,\ldots,n} (t_{n,i+1} - &t_{n,i}) \to 0,\quad n\to \infty, \quad \text{and}\\
&\int\limits_\mathbb{R}\mathcal{H}^{d-1}\Big(\set{x\in \mathbb{R}^d}{u(x) = t}\Big)\,dt = \lim\limits_{n\to \infty}\sum\limits_{i=1}^n (t_{n,i+1} - t_{n,i})\mathcal{H}^{d-1}\big(E_{t_{n,i}}\big).
\end{align*}
Set~$a_{n,i} = t_{n,i+1} - t_{n,i}$,~$\Omega_{n,i} = \set{x\in\mathbb{R}^d}{u(x) > t_{n,i}}$, and note that the limit relation above leads to the desired~$\ell_1$-estimate~\eqref{ell_1} (since the sets~$E_{t_{n,i}}$ are smooth hypersurfaces, their perimeters coincide with their~$(d-1)$-Hausdorff measures). It remains to prove the~weak--star convergence~\eqref{weak}. Let~$\Phi \in C_0(\mathbb{R}^d,\mathbb{R}^d)$. We wish to show
\begin{align*}
\int_{\mathbb{R}^d} \Phi \cdot \nabla u \;dx = \lim_{n\to \infty} \sum_{i=1}^n (t_{n,i+1} - t_{n,i}) \int_{\mathbb{R}^d}  \Phi \cdot  dD \chi_{\Omega_{n,i}}.
\end{align*}
By the already obtained~\eqref{ell_1}, it suffices to prove the latter limit assertion for~$\Phi$ from a dense subset of~$C_0(\mathbb{R}^d,\mathbb{R}^d)$, and  thus we may assume~$\Phi \in C^1_c(\mathbb{R}^d,\mathbb{R}^d)$. In such a case,
\begin{align*}
\sum_{i=1}^n (t_{n,i+1} - &t_{n,i}) \int_{\mathbb{R}^d}  \Phi \cdot  dD \chi_{\Omega_{n,i}}\\ 
&= -\sum\limits_{i=1}^n(t_{n,i+1}-t_{n,i})\int\limits_{\Omega_{n,i}}\Div \Phi \longrightarrow - \int\limits_{\mathbb{R}^d} u \Div \Phi = \int_{\mathbb{R}^d} \Phi \cdot \nabla u \;dx;
\end{align*}
we may justify the convergence since~$\sum_{i=1}^n (t_{n,i+1} - t_{n,i})\chi_{\Omega_{n,i}}$ tends to~$u$ pointwise and has the summable majorant~$|u| + 1$.
\end{proof}

\section{Proofs of Main Results}\label{proofs}
We begin with the 
\begin{proof}[Proof of Lemma \ref{BoxingLemma}]
For each $x \in U$, there exists $R \in \mathcal{D}$ such that $x \in R \subseteq U$ since $U$ is open. Also, since $U$ is bounded, it holds that $|U\cap R|/|R|\rightarrow 0$ whenever $|R|\rightarrow \infty$. Therefore, for each $x \in U$, there exists a minimal $R_x \in \mathcal{D}$ containing $x$ such that $|U \cap R_x|/|R_x| \ge \frac{1}{2}$ and $|U\cap \widehat{R}_x|/|\widehat{R}_x| < \frac{1}{2}$, where $\widehat{R}_x$ is the parent cube of $R_x$. 
Let $\{Q_j\}_{j=1}^{\infty}$ be the collection of maximal cubes in $\{\widehat{R}_x\}_{x \in U}$. 

It is clear that $\{Q_j\}_{j=1}^{\infty}\subseteq\mathcal{D}$ is a disjoint cover of $U$. To verify the last condition, we use the following formulation of the Poincar\'e inequality: there exists $c>0$ such that 
$$
    \fint_Q\fint_Q |\chi_U(y) - \chi_U(z)|\,dydz \leq c \ell(Q)^{1-d}|D\chi_U|(Q)
$$
for every cube $Q \subseteq \mathbb{R}^d$. Given $Q_j$, by its selection condition and continuity, there exists a cube $\widetilde{Q}_j$ such that $\widetilde{Q}_j \subsetneq Q_j$, $\ell(Q_j) \leq 2\ell(\widetilde{Q}_j)$, and $ |U\cap \widetilde{Q}_j| /|\widetilde{Q}_j| = |U^c \cap \widetilde{Q}_j| / |\widetilde{Q}_j| = \frac{1}{2}$. Thus, we have that 
$$
    \frac{1}{2} = 2\frac{|U\cap \widetilde{Q}_j|}{|\widetilde{Q}_j|}\frac{|U^c \cap \widetilde{Q}_j|}{|\widetilde{Q}_j|} = \fint_{\widetilde{Q}_j}\fint_{\widetilde{Q}_j} |\chi_U(y) - \chi_U(z)|\,dydz \leq c \ell(\widetilde{Q}_j)^{1-d}|D\chi_U|(\widetilde{Q}_j),
$$
and, equivalently, 
$$
    \ell(\widetilde{Q}_j)^{d-1}\leq 2c|D\chi_U|(\widetilde{Q}_j).
$$
By the above properties and the monotonicity of the measure $|D\chi_U|$, we have 
$$
    \ell(Q_j)^{d-1}\leq 2^{d-1} \ell(\widetilde{Q}_j)^{d-1} \leq 2^dc|D\chi_U|(\widetilde{Q}_j) \leq 2^dc|D\chi_U|(Q_j).
$$
Hence, the result holds with $C=2^dc$.
\end{proof}

We next give the
\begin{proof}[Proof of Theorem \ref{AtomicDecomposition}]
Let $u \in \dot{\BV}(\mathbb{R}^d)$.   By mollification and multiplication by cutoff functions, we may find $\{u_k\} \subset C^\infty_c(\mathbb{R}^d)$ such that
\begin{align*}
Du_k &\overset{*}{\rightharpoonup} Du\\
|Du_k|(\mathbb{R}^d) &\to |Du|(\mathbb{R}^d),
\end{align*}
see alternatively the approximation of Bonami and Poornima \cite{BP}. For each $u_k$, an application of Lemma \ref{sampling} gives
\begin{align*}
\nabla u_k = \lim_{n \to \infty} \sum_{i=1}^n a_{n,i,k} D \chi_{\Omega_{n,i,k}}
\end{align*}
weakly--star in the sense of measures and
\begin{align*}
\limsup_{n\to \infty} \sum_i |a_{n,i,k}| |D\chi_{\Omega_{n,i,k}}|(\mathbb{R}^d) \leq \|\nabla u_k\|_{L^1(\mathbb{R}^d)}.
\end{align*}
For brevity of notation, write $\Omega= \Omega_{n,i,k}$ and apply Lemma \ref{BoxingLemma} to obtain cubes $Q_j=Q_{n,i,k,j}$ such that
\begin{align*}
\chi_\Omega = \sum_{j=1}^\infty \chi_{\Omega  \cap Q_j}
\end{align*}
    and
\begin{align*}
        \ell(Q_j)^{d-1} \leq C|D\chi_\Omega|(Q_j).
\end{align*}
Putting these several facts together we have
\begin{align*}
D u = \lim_{k \to \infty} \lim_{n \to \infty} \lim_{M\to \infty} \sum_{i=1}^n  \sum_{j=1}^M a_{n,i,k} D\chi_{\Omega_{n,i,k}  \cap Q_{n,i,k,j}}
\end{align*}
weakly--star in the sense of measures.

Define
$$
	\lambda_{n,i,k,j} = C'a_{n,i,k} |D\chi_{\Omega_{n,i,k}}|(Q_{n,i,k,j})
$$
and
$$
	\mu^l_{n,i,k,j} = \frac{1}{C'}\frac{ [D\chi_{\Omega_{n,i,k}\cap Q_{n,i,k,j}}]_l}{|D\chi_{\Omega_{n,i,k}}|(Q_{n,i,k,j})}
$$
for $C'>0$ to be chosen.  We claim $ \mu^l_{n,i,k,j}$ are $(d-1)$-atoms, i.e.
\begin{enumerate}
\item $\operatorname*{supp} \mu^l_{n,i,k,j} \subset 2Q_{n,i,k,j}$;
\item $\mu^l_{n,i,k,j}(2Q_{n,i,k,j}) = 0$;
\item $\esssup_{x \in \mathbb{R}^d, t>0} \left|p_t\ast \mu^l_{n,i,k,j} \right| \leq \frac{1}{t^{1/2}}\frac{1}{l(2Q_{n,i,k,j})^{d-1}}$; and
\item $|\mu^l_{n,i,k,j}|(\mathbb{R}^d) \leq 1$.
\end{enumerate}
The first two properties are straightforward, while the third and fourth follow from minor computations and the inequality $\ell(Q_{n,i,k,j})^{d-1} \leq C|D\chi_{\Omega_{n,i,k}}|(Q_{n,i,k,j})$.  In particular, concerning the third property one has
\begin{align*}
   \left|p_t\ast \mu^l_{n,i,k,j}  \right| &= \frac{1}{C'} \left|p_t\ast \frac{ [D \chi_{\Omega_{n,i,k}\cap Q_{n,i,k,j}}]_l}{|D\chi_{\Omega_{n,i,k}}|(Q_{n,i,k,j})} \right|  \\
   &\leq \frac{C2^{d-1}}{C'}\left|\frac{\partial p_t}{\partial x_l} \ast \frac{ \chi_{\Omega_{n,i,k}\cap Q_{n,i,k,j}}}{l(2Q_{n,i,k,j})^{d-1}} \right| \\
   &\leq  \frac{C2^{d-1}t^{1/2}\|\nabla p_t\|_{L^1}}{C'} \frac{1}{t^{1/2}} \frac{1}{l(2Q_{n,i,k,j})^{d-1}},
\end{align*}
so that one should choose $C'$ large enough such that $C2^{d-1}\|\nabla p_1\|_{L^1}\leq C'$.  The fourth property can be argued similarly, as the product rule in $\dot{\BV}\cap L^\infty$ implies
\begin{align*}
|\mu^l_{n,i,k,j}|(\mathbb{R}^d) &= |\mu^l_{n,i,k,j}|(2Q_{n,i,k,j}) \\
&\leq \frac{1}{C'}\frac{ |D\chi_{\Omega_{n,i,k}}|(Q_{n,i,k,j}) + l(Q_{n,i,k,j})^{d-1}}{|D\chi_{\Omega_{n,i,k}}|(Q_{n,i,k,j})} \\
&\leq \frac{1+C}{C'}\frac{ |D\chi_{\Omega_{n,i,k}}|(Q_{n,i,k,j}) }{|D\chi_{\Omega_{n,i,k}}|(Q_{n,i,k,j})},\end{align*}
so that we should also choose $C'>1+C$.  One then further observes that
\begin{align*}
\limsup_{k \to \infty} \limsup_{n \to \infty} \limsup_{M\to \infty}\sum_{i=1}^n  \sum_{j=1}^M |\lambda_{n,i,k,j}| &\leq  \limsup_{k \to \infty} \limsup_{n \to \infty} \sum_{i=1}^n C'a_{n,i,k} |D\chi_{\Omega_{n,i,k}}|(\mathbb{R}^d)\\
&\leq C' \limsup_{k \to \infty}  \|\nabla u_k\|_{L^1(\mathbb{R}^d)}\\
&= C' |Du|(\mathbb{R}^d).
\end{align*}
As $C_0(\mathbb{R}^d)$ is a separable space, the unit ball of measures in the weak--star topology is metrizable.  A diagonalization argument then gives the desired decomposition.  This completes the proof.
\end{proof}

Corollary \ref{dimension_stable_embedding} is an immediate consequence of Theorem \ref{AtomicDecomposition} and the definition of the space $DS_{d-1}(\mathbb{R}^d)$.  As a result, Corollary \ref{ac} follows from \cite[Theorem G]{DS}.  It therefore remains to give the 
\begin{proof}[Proof of Corollaries \ref{AtomicDecompositionBV}, \ref{GN}, \ref{ac_omega}, and \ref{BV_trace}]
Let $u \in \BV(\Omega)$.  By \cite[Proposition 3.21 on p.~131]{AFP}, there exists $\overline{u}$ such that $\overline{u}=u$ in $\Omega$ and
\begin{align}\label{extension_inequality}
|D\overline{u}|(\mathbb{R}^d) \lesssim |Du|(\Omega) + \|u\|_{L^1(\Omega)}.
\end{align}
This inequality and an application of Theorem \ref{AtomicDecomposition} to $\overline{u} \in \dot{\BV}(\mathbb{R}^d)$ yields the conclusion of Corollary \ref{AtomicDecompositionBV}.  

The embedding properties of Lebesgue spaces and \cite[Theorem A]{DS} give the inequality
\begin{align*}
    \|I_1 [D\overline{u}]_l\|_{L^{d/(d-1)}(\mathbb{R}^d;\mathbb{R}^d)} \lesssim \|[D\overline{u}]_l\|_{DS_{d-1}(\mathbb{R}^d)},
\end{align*}
which in combination with Corollary \ref{ac}, the boundedness of the Riesz transforms, and the extension inequality \eqref{extension_inequality} yields
\begin{align*}
\|\overline{u}\|_{L^{d/(d-1)}(\mathbb{R}^d)} \lesssim |Du|(\Omega) + \|u\|_{L^1(\Omega)},
\end{align*}
from which the claim of Corollary \ref{GN} follows from the fact that $\overline{u}=u$ in $\Omega$.

Corollary \ref{ac_omega} follows from the same absolute continuity property established in Corollary \ref{ac} and the fact that $D\overline{u}=Du$ in $\Omega$.

Finally, Corollary \ref{BV_trace} follows from \cite[Theorem B]{DS} and the extension inequality \eqref{extension_inequality}.

\end{proof}

\section*{Acknowledgements}
D. Spector is supported by the National Science and Technology Council of Taiwan under research grant number 113-2115-M-003-017-MY3 and the Taiwan Ministry of Education under the Yushan Fellow Program.  D. Stolyarov is supported by the Russian Science Foundation grant n. 19-71-30002.

\begin{bibdiv}
\begin{biblist}
\bib{AFP}{book}{
   author={Ambrosio, Luigi},
   author={Fusco, Nicola},
   author={Pallara, Diego},
   title={Functions of bounded variation and free discontinuity problems},
   series={Oxford Mathematical Monographs},
   publisher={The Clarendon Press, Oxford University Press, New York},
   date={2000},
   pages={xviii+434},
   isbn={0-19-850245-1},
   review={\MR{1857292}},
}

\bib{Anzellotti}{article}{
   author={Anzellotti, G.},
   title={BV solutions of quasilinear PDEs in divergence form},
   journal={Comm. Partial Differential Equations},
   volume={12},
   date={1987},
   number={1},
   pages={77--122},
   issn={0360-5302},
   review={\MR{0869103}},
   doi={10.1080/03605308708820485},
}

\bib{APHF2019}{article}{
		author = {A. Arroyo-Rabasa},
        author = {G. De Philippis},
        author = {J. Hirsch},
        author = {F. Rindler},
		title = {Dimensional estimates and rectifiability for measures satisfying linear {P}{D}{E} constraints},
		journal = {Geom. Funct. Anal.},
		volume = {29},
		year = {2019},
		number = {3},
		pages = {639--658}
		}

\bib{AN2023}{article}{
title={Boxing inequalities in Banach spaces},
author={S. Avvakumov},
author={A. Nabutovsky},
journal={Arxiv e-prints: 2304.02709},
date={2023}
}

\bib{Ayoush2023}{article}{
   author={Ayoush, Rami},
   title={On finite configurations in the spectra of singular measures},
   journal={Math. Z.},
   volume={304},
   date={2023},
   number={1},
   pages={Paper No. 6, 17},
  }

\bib{AyoushWojciechowski2017}{article}{
   author={Ayoush, Rami},
   author={Wojciechowski, Micha\l},
   title={On dimension and regularity of vector-valued measures under
   Fourier analytic constraints},
   journal={Illinois J. Math.},
   volume={66},
   date={2022},
   number={3},
   pages={289--313},

}

\bib{BP}{article}{
	author={A. Bonami},
	author={S. Poornima},
	title={Nonmultipliers of the Sobolev spaces $W^{k,1}(\mathbb{R}^n)$},
	journal={J. Funct. Anal.},
	volume={71},
	year={1987},
	number={1},
	pages={175--181}
}

\bib{BourgainBrezis2004}{article}{
		author = {J. Bourgain},
        author = {H. Brezis},
		title = {New estimates for the {L}aplacian, the div--curl, and related {H}odge systems},
		journal = {C. R. Math. Acad. Sci. Paris},
		volume = {338},
		year = {2004},
		pages = {539--543}
		}

\bib{BourgainBrezis2007}{article}{
		author = {J. Bourgain},
        author = {H. Brezis},
		title = {New estimates for elliptic equations and {H}odge type systems},
		journal = {J. Eur. Math. Soc.},
		volume = {9},
		year = {2007},
		pages = {277--315}
  }

\bib{BousquetVanSchaftingen2014}{article}{
		author = {P. Bousquet},
        author = {J. Van Schaftingen},
		title = {Hardy--{S}obolev inequalities for vector fields and canceling linear differential operators},
		journal = {Indiana Univ. Math. J.},
		volume = {63},
		year = {2014}, 
		number = {5},
		pages =  {1419--1445}
		}

\bib{CVSYu2017}{article}{ 
		author = {S. Chanillo},
        author = {J. Van Schaftingen},
        author = {P.-L. Yung},
		title = {Bourgain--{B}rezis inequalities on symmetric spaces of non-compact type},
		journal = {J. Funct. Anal.},
		volume = {273},
		number = {4},
		year = {2017},
		pages = {1504--1547}
		}

\bib{DGN1998}{article}{
    author={D. Danielli},
    author={N. Garofalo},
    author={D-M. Nhieu},
    title={Trace inequalities for Carnot-Carath\'eodory spaces and applications},
    journal={Ann. Scuola Norm. Sup. Pisa Cl. Sci. (4)},
    volume={27},
    date={1998},
    number={2},
    pages={195--252},
    review={\MR{1664688}}
}

\bib{Dobronravov2024}{article}{
   author={Dobronravov, Nikita},
   title={Frostman lemma revisited},
   language={English, with English and Finnish summaries},
   journal={Ann. Fenn. Math.},
   volume={49},
   date={2024},
   number={1},
   pages={303--318},
}
\bib{EG}{book}{
   author={Evans, Lawrence C.},
   author={Gariepy, Ronald F.},
   title={Measure theory and fine properties of functions},
   series={Textbooks in Mathematics},
   edition={Revised edition},
   publisher={CRC Press, Boca Raton, FL},
   date={2015},
   pages={xiv+299},
   isbn={978-1-4822-4238-6},
   review={\MR{3409135}},
}

\bib{Federer}{article}{
   author={Federer, Herbert},
   title={The area of a nonparametric surface},
   journal={Proc. Amer. Math. Soc.},
   volume={11},
   date={1960},
   pages={436--439},
   issn={0002-9939},
   review={\MR{0123681}},
   doi={10.2307/2034793},
}

\bib{Giusti}{book}{
   author={Giusti, Enrico},
   title={Minimal surfaces and functions of bounded variation},
   series={Monographs in Mathematics},
   volume={80},
   publisher={Birkh\"auser Verlag, Basel},
   date={1984},
   pages={xii+240},
   isbn={0-8176-3153-4},
   review={\MR{0775682}},
   doi={10.1007/978-1-4684-9486-0},
}

\bib{GRV2024}{article}{
   author={Gmeineder, Franz},
   author={Rai\c t\u a, Bogdan},
   author={Van Schaftingen, Jean},
   title={Boundary ellipticity and limiting $\rm L^1$-estimates on
   halfspaces},
   journal={Adv. Math.},
   volume={439},
   date={2024},
   pages={Paper No. 109490, 25},
   issn={0001-8708},
   review={\MR{4690504}},
   doi={10.1016/j.aim.2024.109490},
}

\bib{G1960}{article}{
title={Boxing inequalities},
author={W. Gustin},
journal={J. Math. Mech.},
volume={9},
date={1960},
pages={229--239},
review={\MR{0123680}}}

\bib{HL2013}{article}{
title={Sobolev spaces, Lebesgue points and maximal functions},
author={P. Haj\l asz},
author={Z. Liu},
journal={J. Fixed Point Theory Appl.},
volume={13},
date={2013},
number={1},
pages={259--269},
review={\MR{3071953}}
}

\bib{HernandezSpector2024}{article}{
   author={Hernandez, Felipe},
   author={Spector, Daniel},
   title={Fractional integration and optimal estimates for elliptic systems},
   journal={Calc. Var. Partial Differential Equations},
   volume={63},
   date={2024},
   number={5},
   pages={Paper No. 117, 29},
   issn={0944-2669},
   review={\MR{4739434}},
   doi={10.1007/s00526-024-02722-8},
}

\bib{KKST2008}{article}{
title={Lebesgue points and capacities via the boxing inequality in metric spaces},
author={J. Kinnunen},
author={R. Korte},
author={N. Shanmugalingam},
author={H. Tuominen},
journal={Indiana Univ. Math. J.},
volume={57},
date={2008},
number={1},
pages={401--430},
review={\MR{2400262}}
}

\bib{LanzaniStein2005}{article}{
		author = {L. Lanzani and E. M. Stein},
		title = {A note on div curl inequalities}, 
		journal = {Math. Res. Let.},
		volume =  {12},
		number = {1},
		year =  {2005}, 
		pages = {57--61}
		}

\bib{Mattila}{book}{
				author={Mattila, Pertti},
				title={Fourier analysis and Hausdorff dimension},
				series={Cambridge Studies in Advanced Mathematics},
				volume={150},
				publisher={Cambridge University Press, Cambridge},
				date={2015},
				pages={xiv+440},
				isbn={978-1-107-10735-9},
				review={\MR{3617376}},
				doi={10.1017/CBO9781316227619},
			}

\bib{Mazya2007}{article}{
		author = {V. Maz'ya},
		title = {{B}ourgain--{B}rezis type inequality with explicit constants},
		journal =  {Contemp. Math.},
		pages = {247--252}, 
		volume = {445}, 
		year = {2007}
		}

\bib{Mazya2010}{article}{
		author = {V. Maz'ya},
		title = {Estimates for differential operators of vector analysis involving~${L}_1$-norm}, 
		journal = {J. Eur. Math. Soc.},
		volume = {12},
		number = {1},
		year = {2010}, 
		pages = {221--240}
		}

\bib{Ornstein1962}{article}{
   author={Ornstein, Donald},
   title={A non-equality for differential operators in the $L\sb{1}$ norm},
   journal={Arch. Rational Mech. Anal.},
   volume={11},
   date={1962},
   pages={40--49},
   issn={0003-9527},
   review={\MR{0149331}},
   doi={10.1007/BF00253928},
}

\bib{PS2020}{article}{
title={A boxing inequality for the fractional perimeter},
author={A. C. Ponce},
author={D. Spector},
journal={Ann. Sc. Norm. Super. Pisa Cl. Sci. (5)},
volume={20},
date={2020},
number={1},
pages={107--141},
review={\MR{4088737}}
}

\bib{Raita_report}{article}{
	title={L1-estimates and A-weakly differentiable functions},
	author={Rai{\cb{t}}{\u{a}}, B.},
	year={2018},
	note={Technical Report OxPDE-18/01, University of Oxford}
}

\bib{RSS}{article}{
   author={Rai\c{t}\u{a}, Bogdan},
   author={Spector, Daniel},
   author={Stolyarov, Dmitriy},
   title={A trace inequality for solenoidal charges},
   journal={Potential Anal.},
   volume={59},
   date={2023},
   number={4},
   pages={2093--2104},
   issn={0926-2601},
   review={\MR{4684387}},
   doi={10.1007/s11118-022-10008-x},
}

\bib{RoginskayaWojciechowski2006}{article}{
 		author = {M. Roginskaya},
        author = {M. Wojciechowski},
		title = {Singularity of vector valued measures in terms of {F}ourier transform},
		journal = {J. Fourier Anal. Appl.}, 
		year = {2006},
		volume = {12},
		pages = {213--223},
		number = {2}
  }

  \bib{Spector2020}{article}{
   author={Spector, Daniel},
   title={New directions in harmonic analysis on $L^1$},
   journal={Nonlinear Anal.},
   volume={192},
   date={2020},
   pages={111685, 20},
   issn={0362-546X},
   review={\MR{4034690}},
   doi={10.1016/j.na.2019.111685},
}

\bib{DS}{article}{
title={On dimension stable spaces of measures},
author={D. Spector},
author={D. Stolyarov},
journal={Arxiv e-prints: 2405.10728},
date={2024}
}

\bib{SpectorVanSchaftingen2019}{article}{
        author = {D. Spector},
        author = {J. Van Schaftingen},
		title = {Optimal embeddings into {L}orentz spaces for some vector differential operators via {G}agliardo's lemma},
		journal = {Atti Accad. Naz. Lincei Rend. Lincei Mat. Appl.},
		volume = {30},
		number = {3},
		pages = {413--436},
		year = {2019}
		}

\bib{Stolyarov2021}{article}{
   author={Stolyarov, D. M.},
   title={Weakly canceling operators and singular integrals},
   note={English version published in Proc. Steklov Inst. Math. {\bf 312}
   (2021), no. 1, 249--260.},
   language={Russian, with Russian summary},
   journal={Tr. Mat. Inst. Steklova},
   volume={312},
   date={2021},
   pages={259--271},
}

\bib{Stolyarov2022}{article}{
   author={Stolyarov, D. M.},
   title={Hardy-Littlewood-Sobolev inequality for $p=1$},
   language={Russian, with Russian summary},
   journal={Mat. Sb.},
   volume={213},
   date={2022},
   number={6},
   pages={125--174},
   issn={0368-8666},
   translation={
      journal={Sb. Math.},
      volume={213},
      date={2022},
      number={6},
      pages={844--889},
      issn={1064-5616},
   },
   review={\MR{4461456}},
   doi={10.4213/sm9645},
}

\bib{Stolyarov2023}{article}{
   author={Stolyarov, Dmitriy},
   title={Dimension estimates for vectorial measures with restricted
   spectrum},
   journal={J. Funct. Anal.},
   volume={284},
   date={2023},
   number={1},
   pages={Paper No. 109735, 16},
}

\bib{StolyarovWojciechowski2014}{article}{
   author={Stolyarov, Dmitriy M.},
   author={Wojciechowski, Michal},
   title={Dimension of gradient measures},
   language={English, with English and French summaries},
   journal={C. R. Math. Acad. Sci. Paris},
   volume={352},
   date={2014},
   number={10},
   pages={791--795},
}

\bib{Strauss}{article}{
   author={Strauss, Monty J.},
   title={Variations of Korn's and Sobolev's equalities},
   conference={
      title={Partial differential equations},
      address={Univ. California,
      Berkeley, Calif.},
      date={1971},
   },
   book={
      publisher={Amer. Math. Soc., Providence, R.I.},
      series={Proc. Sympos. Pure Math.}, 
      volume={XXIII}, 
   },
   date={1973},
   pages={207--214},
}

\bib{VanSchaftingen2004}{article}{
		author = {J. Van Schaftingen},
		title = {Estimates for {$L^1$}-vector fields},
		journal = {C. R. Math. Acad. Sci. Paris},
		volume = {339},
		number = {3},
		year = {2004},
		pages = {181--186}
		}

\bib{VanSchaftingen2004one}{article}{
		author = {J. Van Schaftingen},
		title = {A simple proof of an inequality of {B}ourgain, {B}rezis and {M}ironescu},
		journal = {C. R. Math. Acad. Sci. Paris},
		volume =  {338},
		number = {1},
		pages = {23--26},
		year = {2004}
		}  

\bib{VanSchaftingen2013}{article}{
 		author = {J. Van Schaftingen},
		title = {Limiting {S}obolev inequalities for vector fields and canceling linear differential operators},
		journal = {J. Eur. Math. Soc.}, 
		year = {2013},
		volume = {15},
		pages = {877--921},
		number = {3} }

  \bib{VanSchaftingen2014}{article}{
   author={Van Schaftingen, Jean},
   title={Limiting Bourgain-Brezis estimates for systems of linear
   differential equations: theme and variations},
   journal={J. Fixed Point Theory Appl.},
   volume={15},
   date={2014},
   number={2},
   pages={273--297},
   issn={1661-7738},
   review={\MR{3298002}},
   doi={10.1007/s11784-014-0177-0},
}

  \bib{VanSchaftingen2024}{article}{
   author={Van Schaftingen, Jean},
   title={Endpoint Sobolev inequalities for vector fields and cancelling
   operators},
   conference={
      title={Extended abstracts 2021/2022---Methusalem lectures},
   },
   book={
      series={Trends Math.},
      publisher={Birkh\"auser/Springer, Cham},
   },
   isbn={978-3-031-48578-7},
   isbn={978-3-031-48579-4},
   date={[2024] \copyright2024},
   pages={47--56},
   review={\MR{4738462}},
   doi={10.1007/978-3-031-48579-4\_5},
}

\bib{Ziemer}{book}{
   author={Ziemer, William P.},
   title={Weakly differentiable functions},
   series={Graduate Texts in Mathematics},
   volume={120},
   note={Sobolev spaces and functions of bounded variation},
   publisher={Springer-Verlag, New York},
   date={1989},
   pages={xvi+308},
   isbn={0-387-97017-7},
   review={\MR{1014685}},
   doi={10.1007/978-1-4612-1015-3},
}
\end{biblist}
\end{bibdiv}

\end{document}